% !TEX encoding = UTF-8 Unicode
\documentclass[oneside]{amsart}
\usepackage{amsmath, amssymb}
\usepackage{amsthm}
\newtheorem{theorem}{Theorem}[section]
\newtheorem{proposition}[theorem]{Proposition}
\newtheorem{lemma}[theorem]{Lemma}

\usepackage{graphicx}

\def\S{\mathbb{S} } 

\def\T{\mathbb{T} }

\def\Q{\mathbb{Q} } 
\def\R{\mathbb{R} } 
\def\Z{\mathbb{Z} } 
\def\nbd{neighborhood } 
\def\nbds{neighborhoods } 
\def\R{\mathbb{R} }

\title[Genericity for non-wandering surface flows]
{Genericity for non-wandering surface flows}
\author{Tomoo Yokoyama}
\date{\today}

\address{Department of Mathematics, Hokkaido University, 
Kita 10, Nishi 8, Kita-Ku, Sapporo, Hokkaido, 060-0810, Japan \\
}
\email{yokoyama@math.sci.hokudai.ac.jp}

\thanks{The author is partially supported 
by the JST CREST Program at Department of Mathematics,  
Kyoto University of Education.}

%\subjclass[2010]{}

\begin{document}

\maketitle

\begin{abstract}
Consider the set $\chi^0_{\mathrm{nw}}$ of 
non-wandering continuous flows on 
a closed surface. 
% ($r = 0$ or $1$). 
Then 
such a flow can be approximated by 
regular non-wandering flows 
without heteroclinic 
%($\partial$-)saddle 
connections 
nor locally dense orbits in $\chi^0_{\mathrm{nw}}$. 
Using this approximation, 
we show that  
a non-wandering continuous flow 
on a closed connected surface 
is  topologically stable 
if and only if 
the orbit space of it is homeomorphic to 
a closed interval. 
Moreover we state 
%the characterization of 
%topologically stable non-wandering flow on 
%the sphere $\S^2$ (resp. projective plane $\mathbb{P}^2$, 
%Klein bottle $\mathbb{K}^2$) 
%and 
the non-existence of 
topologically stable non-wandering flows 
on closed surfaces which are not neither $\S^2$, $\mathbb{P}^2$, nor $\mathbb{K}^2$.  
\end{abstract}

\section{Introduction and preliminaries}
%In \cite{D}, 
%it is shown that 
%there are no exceptional minimal sets of $C^2$-flows on tori. 
%In \cite{S}, this result is generalized to the compact surface cases. 
%On the other hand, 
%there is 
%the Denjoy flow ( first constructed by Poincar\'e \cite{P}) 
%which is a $C^1$-flow 
%with an exceptional minimal set. 
%In this paper, 
%we show the non-existence of exceptional minimal sets of 
%continuous non-wandering flows on compact surfaces. 
%
%%
In \cite{NZ}, 
they have given the characterization of 
the non-wandering flows on compact surfaces with finitely many singular points. 
In \cite{Mar}, 
the author has given a description near orbits of the non-wandering flow with 
the set of singular points which is totally disconnected. 
%
%In this paper, 
%we deal with non-wandering flows 
%with arbitrary singular points.
%
On the other hand, 
in \cite{MW}, 
they have characterized 
structurally stable divergence free vector fields on 
connected compact orientable surfaces. 
In \cite{Y}, 
the author has given the characterization of 
a non-wandering flows on compact surfaces with  arbitrary singular points. 
In this paper, 
we study genericity of 
non-wandering flows 
%with finitely many singular points 
with arbitrary singular points 
on closed surfaces.  
Moreover 
we characterize the topological stability for 
non-wandering continuous flows 
on closed connected surfaces.  
%A non-wandering continuous flow 
%on a closed connected surface 
%is  topologically stable 
%if and only if 
%the orbit space of it is homeomorphic to 
%a closed interval. 
%On the other hands, 
%we characterize topologically the non-wandering property for 
%a continuous flow 
%with arbitrary singular points 
%on a closed surface. 
%
%We assume that 
%all surfaces are compact. 
% and connected.  

By flows, 
we mean continuous $\R$-actions on compact surfaces.
%we mean continuous $\R$-actions defined by vector fields on surfaces. 
Let 
$v: \R \times M \to M$ be a flow. 
% on a surface $M$. 
Put $v_t( \cdot ) := v(t, \cdot )$ 
and $O_v( \cdot ) := v(\R, \cdot )$.  
Recall that 
a point $x$ of $M$ is singular 
%point or a singular point 
if 
$O_v(x)$ is a singleton, 
is regular if it is not singular,  
is periodic if 
there is $T>0$ such that 
$v(T, x) = x$ and 
$v(t, x) \neq x$ for any $t \in (0, T)$, and 
%a point $x$ of $M$ is 
is non-wandering if 
for each \nbd $U$ of $x$ and 
each positive number $N$, 
there is $t \in \R$ with $|t| > N$ such that 
$v_t(U) \cap U \neq \emptyset$   
%(resp. $v_t(x) \cap U$) 
% is not empty, 
and that 
$v$ is non-wandering if 
every point is non-wandering. 
An orbit is proper if 
it is embedded, 
%An orbit is 
locally dense if 
the closure of it has nonempty interior, 
and 
exceptional if 
it is neither proper nor locally dense.  
A point is proper (resp. locally dense, exceptional) if 
so is its orbit. 
Denote by 
$\mathop{\mathrm{Sing}}(v)$ 
(resp. $\mathop{\mathrm{Per}}(v)$, $\Omega{(v)}$) 
the set of singular 
(resp. periodic, non-wandering) 
points of $v$ 
and by 
$\mathrm{LD}$ (resp. $\mathrm{E}$, $\mathrm{P}$)
the union of locally dense orbits (resp. exceptional orbits, 
non-closed proper orbits).  
Notice that 
$\mathrm{LD}$ need not open 
because of Example 1 \cite{Y}. 
%(see Example \ref{ex1}). 
%
Denote by 
$\chi^r_{\mathrm{nw}} = \chi^r_{\mathrm{nw}}(M)$ 
the set of non-wandering $C^r$ flows on a surface $M$ 
with respect to the $C^{r}$-topology for $r \geq 0$. 
Recall that 
the omega (resp. alpha) limit set of a point $x$ is  
$\omega(x) 
:= \bigcap_{n\in \mathbb{R}}\overline{\{v_t(x) \mid t > n\}}$ 
(resp.  $\alpha(x) := \bigcap_{n\in \mathbb{R}}\overline{\{v_t(x) \mid t < n\}}$) 
and that 
a separatrix of a singular point $p$ is 
a regular orbit of a point 
whose alpha or omega limit set 
is the singleton of $p$. 
A $\partial$-$k$-saddle (resp. $k$-saddle) is 
an isolated singular point on (resp. outside of) $\partial M$ with exactly $(2k + 2)$-separatrices, 
counted with multiplicity, 
where $\partial M$ is the boundary of $M$. 
A multi-($\partial$-)saddle is 
a ($\partial$-)$k$-saddle for some $k \in \dfrac{1}{2} \Z_{\geq 0}$. 
%For $k = 1$, it 
A $1$-saddle is topologically an ordinary saddle. 
%For odd $2k + 2$, it is locally non-orientable. 
The (multi-)saddle connection diagram 
is the union of (multi-)saddles, 
(multi-)$\partial$-saddles, and 
separatrices connecting (multi-)($\partial$-)saddles. 
A (multi-)saddle connection is 
a connected component of 
the (multi-)saddle connection diagram. 
A subset is essential 
if 
some connected component of it 
is neither null homotopic 
nor homotopic to a subset of the boundary $\partial M$. 
%, 
%where $\partial M$ is the boundary of $M$. 
%
%
%
Recall that 
a $C^r$ flow ($r>0$) is regular if it has no degenerate singular points 
and 
that 
a continuous flow is regular if each singular point has a \nbd 
which is topologically equivalent to a \nbd of a non-degenerate singular point. 
%By a sub-trajectory of a point $p$, 
%we mean a non-degenerate curve which 
%is contained in an orbit 
%and contains $p$ as an interior point. 
An orbit arc is 
a non-degenerate segment of an orbit of $v$. 
A point $p$ is called a transverse point to a curve $C$ 
if there are 
an orbit arc $\gamma$ containing $p$  
and a small open disk $U$ centered at $p$ 
such that 
$C \cap \gamma = \{ p \}$, 
$\gamma - \{ p \} \subset U \setminus C$, 
and 
$U \setminus C$ is two open disks 
each of which intersects $\gamma$. 
A point $p$ in a non-degenerate curve $C$ is called 
a tangency to $C$ for a flow $v$ if 
there are 
a small \nbd $N$ of $C$ 
and 
a small orbit arc $\gamma$ whose interior 
contains $p$ 
%and which is contained in an orbit of $v$ 
such that 
$C \cap \gamma = \{ p \}$
and 
one connected component of $N - C$ contains 
$\gamma - \{ p \}$. 
When $C$ is a subset of a boundary of a open subset $D$ of $M$ 
with $\gamma - \{ p \} \subset D$ 
(resp. $\gamma - \{ p \} \subset M - D$), 
a tangency $p$ is 
said to be interior (resp. exterior) with respect to $D$. 
An orbit arc of a flow $v$ is called 
a tangent arc to a curve $C$ if 
this arc is contained in $C$. 

\section{Structural stability}

An interval exchange transformation of a circle $\T^1 = \R/\Z$ 
is a left-continuous bijection, with finitely many discontinuity points,
that is piecewise a rotation.
For $\theta \in [0, 1)$, 
denote by $R_{\theta}$ the rotation by angle $\theta$ on $\T^1$. 
We use the following tool which is an analogy of Theorem 2.1 \cite{AB}. 

\begin{lemma}\label{lem02a}
Let $f$ be an interval exchange transformation. 
Then 
there is a dense subset $S$ of $[0, 1)$ 
such that 
the composition 
$f \circ R_{\theta}$ has periodic orbits 
for any $\theta \in S$. 
%, 
%where $R_{\theta}$ is the rotation by angle $\theta$ on $\T^1$. 
\end{lemma}

\begin{proof}
Suppose that 
$f$ is a rotation $R_{\alpha}$. 
Since the composition $f \circ R_{\theta}$ 
for any $ \theta \in [0, 1)$ with $\alpha + \theta \in \Q$ 
is a rational rotation, 
a dense subset $\{ \theta \in [0, 1) \mid \alpha + \theta \in \Q \}$ 
is desired. 
Thus we may assume that 
$f$ has discontinuity points.  
Replacing 
Theorem 3.1 \cite{AB} 
with 
Theorem 2 \cite{Mas} 
in the proof of Theorem 2.1 \cite{AB}, 
we can obtain this assertion. 
\end{proof}

Recall that 
a subset is 
($v$-)saturated 
or a ($v$-)saturation 
if 
it is a union of orbits of $v$. 
Moreover, 
a periodic orbit is 
one-sided (resp. two-sided) if 
there is (resp. is not) a small \nbd of it which  
is homeomorphic to a M\"obius band. 
We state the non-existence of 
locally dense orbits.

%
%From now on, 
%we assume that 
%$M$ is closed. 

\begin{lemma}\label{lem02}
Each flow $v$  
on a compact 
%orientable 
surface $M$ 
can be 
approximated by 
a flow without locally dense orbits 
in $\chi^r_{\mathrm{nw}}$ ($r \in \Z_{\geq 0}$)
such that it preserves 
$\mathop{\mathrm{Sing}}(v) \sqcup \mathop{\mathrm{Per}}(v)$. 
%, 
%where $\sqcup$ is the disjoint union symbol.   
% ($r \geq 0$).  
Moreover 
if $\mathrm{Sing}(v)$ is finite, 
then 
this perturbation can be chosen 
such that it preserves 
$\mathop{\mathrm{Sing}}(v) \sqcup \mathop{\mathrm{Per}}(v) \sqcup \mathrm{P}$.  
\end{lemma}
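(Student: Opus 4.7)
The plan is to use the structural decomposition of non-wandering surface flows to localize the problem to the finitely many quasi-minimal regions where local density occurs, and then destroy minimality by perturbing the first-return map on a closed transversal.

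By the Maier-type structure theorem for non-wandering surface flows (see \cite{NZ}), $\overline{\mathrm{LD}}$ decomposes as a finite disjoint union of closed invariant quasi-minimal sets $Q_1, \ldots, Q_k$, each with nonempty interior, inside which every non-singular, non-periodic orbit is locally dense. In each $Q_i$ I would pick a closed transversal $\tau_i$ meeting the interior of $Q_i$ and disjoint from $\mathrm{Sing}(v) \sqcup \mathrm{Per}(v)$, so that the first-return map $R_i:\tau_i \to \tau_i$ is a minimal (generalized) interval exchange. For the \emph{moreover} clause, since $\mathrm{Sing}(v)$ is finite the set $\mathrm{P}$ is at most countable (the separatrices of the finitely many singularities together with proper orbits having singular limits), so $\tau_i$ can additionally be chosen disjoint from $\mathrm{P}$.

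Next I would $C^r$-perturb $v$ in a thin flow-box neighborhood of each $\tau_i$, inserting a small bump in the vector field that adjusts the return-time function along $\tau_i$ so that the new return map $R_i'$ has rationally commensurable length data and therefore all of its orbits periodic. Outside these flow boxes the flow is unchanged, so $\mathrm{Sing}(v) \sqcup \mathrm{Per}(v)$ (and $\mathrm{P}$ in the moreover case) is preserved. The perturbed flow $\tilde v$ is then non-wandering, since orbits in each modified $Q_i$ become flow-periodic while orbits outside $\bigcup_i Q_i$ coincide with orbits of $v$ and hence inherit recurrence, and $\tilde v$ has no locally dense orbits, since periodic orbits are not locally dense and outside $\bigcup_i Q_i$ there were none to begin with.

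The main obstacle is showing that a $C^r$-small smooth perturbation of the vector field can render the return-map lengths rationally commensurate. For $r = 0$ this reduces to an essentially topological surgery on the flow box around $\tau_i$, but for $r = 1$ one must design a smooth bump on the vector field whose integrated effect on the return map closes up all orbits after a bounded number of iterations, exploiting the density of rational-length IETs in the parameter space of minimal ones and transferring this approximation back to the flow via a careful flow-box construction with smooth cutoffs at the boundary of the perturbation support.
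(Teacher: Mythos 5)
Your broad idea---take a closed transversal through the locally dense region and perturb the first-return map to destroy minimality---is the same germ as the paper's argument, but the way you organize it creates genuine gaps. First, you invoke a Maier-type decomposition of $\overline{\mathrm{LD}}$ into finitely many quasi-minimal sets with a return map that is a \emph{finite} generalized interval exchange. That picture is only available when $\mathrm{Sing}(v)$ is finite, and the lemma's main clause allows $\mathrm{Sing}(v)$ to be an arbitrary closed set (that is precisely why the ``moreover'' clause singles out the finite case). With infinitely many singularities the return map to a closed transversal can have infinitely many discontinuities, so ``rationally commensurable length data'' does not make sense, and the argument that all orbits in $Q_i$ become periodic breaks down. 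The paper avoids this entirely: it makes a single transversal arc through one locally dense orbit $O$, builds a closed transversal $T\subset\mathrm{int}\,\overline{O}$, perturbs by a rotation near $T$ so that just $O\cap T$ becomes periodic, then cuts $M$ along one of the resulting periodic orbits, caps with two center disks, and inducts on genus. You never propose this cut-and-cap genus reduction, and without it your ``destroy all quasi-minimal sets at once'' plan has no fallback when a single perturbation fails to periodize everything.

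Second, you explicitly concede that you do not know how to realize the desired change of return map by a $C^1$-small \emph{smooth} perturbation of the vector field; that is exactly the point the paper settles by citing Barth--Schneider (\cite{BS}): for any two countable dense subsets $A,B\subset\mathbb{R}$ there is an entire function $f$ with $f(A)=B$, which supplies a $C^\omega$ (hence $C^1$) rotation-type perturbation sending $O\cap T$ onto a periodic set. Finally, note that non-wandering of the perturbed flow requires an argument even in your setting: the paper checks it by splitting $M$ into $\overline{O_v(T)}$, where the new orbits are in $\overline{\mathrm{Per}(v')}$, and its complement, where $v'=v$; you assert non-wandering but do not verify it for points on the boundary of the support of the perturbation. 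So the proposal needs the infinite-singularity case handled differently, the $C^1$ realization supplied, and the genus-reduction induction (or an equivalent device) added.
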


\begin{proof} 
We may assume that 
$M$ is connected. 
Fix any $v \in \chi^r_{\mathrm{nw}}$. 
By Lemma 2.4 \cite{Y}, 
there are no exceptional orbits 
and 
$\mathop{\mathrm{Per}}(v)$ is open. 
%Then 
%$\overline{\mathop{\mathrm{Per}}(v)} \subseteq 
%\mathop{\mathrm{Per}}(v) \sqcup \mathop{\mathrm{Sing}}(v) \sqcup \mathrm{P}$ 
%and so  
By Lemma 2.3 \cite{Y}, 
we have 
$\overline{\mathop{\mathrm{Per}}(v) \sqcup \mathop{\mathrm{Sing}}(v)} \cap \mathrm{LD} = \emptyset$. 
%
%By taking 
%%a double covering of $M$ and 
%the doubling of $M$, 
%we may assume that 
%%$v$ is transversally orientable 
%%and 
%$M$ is closed. 
By induction on the genus $g = g(M)$ of $M$, 
we will show the assertion. 
Suppose that $g = 0$. 
Trivially there are no locally dense orbits. 
Assume that 
$g >0$.   
Suppose that  
$M$ has an essential periodic orbit $\gamma$. 
%Consider an essential periodic orbit $\gamma$, 
Cutting this periodic orbit $\gamma$, 
then 
$M - \gamma$ has 
one or two new boundaries. 
%
%
%Suppose that 
%$M - \gamma$ has 
%one new boundary. 
%Then $M$ is non-orientable and 
%$\gamma$ has a small saturated \nbd $U$
%which consists of periodic orbits 
%and 
%is homeomorphic to the M\"obius band. 
%For an periodic orbit $\gamma' \neq \gamma \subset U$, 
%there is an annular \nbd of $\gamma'$ 
%which consists of periodic orbits.  
%%and the complement $M - \gamma'$ has two new boundaries. 
%Replacing $\gamma$ with $\gamma'$, 
%we may assume that 
%there is an annular \nbd of $\gamma'$ 
%which consists of periodic orbits. 
%%the complement $M - \gamma$ has two new boundaries. 
%
Adding one or two center disks to the two boundaries of $M - \gamma$,  
we obtain 
a new compact surface $M'$ each of whose connected components 
has the genus less than one of $M$ 
and the resulting non-wandering flow $v'$ on $M'$. 
Then the inductive hypothesis implies 
that 
$v'$ can be approximated by 
a flow $v'_1$ without locally dense orbits 
%in $\chi^r_{\mathrm{nw}}$ 
such that it preserves 
$\mathop{\mathrm{Sing}}(v') \sqcup \mathop{\mathrm{Per}}(v')$.   
Removing the new one or two center disks 
and pasting the boundaries of $M - \gamma$, 
the resulting flow from $v'_1$ is a desired perturbation of $v$.  
Thus we may assume that 
each periodic orbit of $v$ is not essential.  
For any locally dense orbit $O$, 
%we can take a small transversal arc $\gamma$ to a point of $O$ 
%such that $\gamma \cap (\mathop{\mathrm{Sing}}(v) \sqcup \mathop{\mathrm{Per}}(v)) = \emptyset$. 
%By 
the structure theorem \cite{G} implies that 
there is a closed transversal $C \subset \overline{O}$. 
Since $\mathop{\mathrm{Per}}(v)$ is open, 
the closure $\overline{O}$ contains no periodic orbits 
and so $C \cap \mathop{\mathrm{Per}}(v) = \emptyset$. 
Since $C$ is transversal, 
we obtain 
$C \cap (\mathop{\mathrm{Sing}}(v) \sqcup \mathop{\mathrm{Per}}(v)) = \emptyset$.
Let $T$ be the first return map on $C$ induced by $v$,  
$E$ the interval exchange transformation, 
and $h$ the semi-conjugate (i.e. $T$ covers $E$ via $h$) 
as in the structure theorem \cite{G}.  
Since the image of $T$ for each point of $C \cap O$ is dense in $C$, 
the domain of $T$ is open dense in $C$.  
Then the semi-conjugate $h$ is injective. 
Indeed otherwise there are two points $x \neq y \in C$ 
such that $h(x) = h(y)$. 
Since the degree of $h$ is one, 
the monotonicity of $h$ implies that 
there is an open interval $I$
% which contains $x, y$ and  
whose image of $h$ is a singleton.  
Since $C \cap O$ is dense in $C$, 
the image $h(C \cap O)$ is 
%a closed orbit and so 
finite. 
The Hausdorff separation axiom implies that  
the image of $h$ is finite and so 
a singleton, 
which contradicts to that the degree of $h$ is one.   
Therefore 
this $h$ becomes bijective and so homeomorphic.  
Thus 
the first return map $T$ on $C$ is 
topologically conjugate to 
an interval exchange transformation. 
%
%Recall the following fact \cite{BS} that 
%for any countable dense subsets $A$ and $B$ of $\R$, 
%there is an entire function $f: \C \to \C$ such that 
%$f(A) = B$. 
%Suppose that 
%$f_v$ is a rotation. 
%Then there is an arbitrary small number $\theta \in [0,1)$ 
%such that 
%the composition $f_v \circ R_{\theta}$ 
%is a rational rotation. 
%Thus we may assume that 
%$f$ has discontinuous points.  
%
%
By Lemma \ref{lem02a}, 
there is an arbitrary small number $\theta \in [0,1)$ 
such that 
the composition $T \circ R_{\theta}$ has 
periodic orbits. 
Since interval exchange transformations 
are area-preserving, 
this composition $T \circ R_{\theta}$ is area-preserving and so non-wandering. 
Taking an arbitrary small perturbation by the rotation $R_{\theta}$ near $C$, 
we obtain a non-wandering resulting flow $v'$ which has an essential periodic orbit. 
As above, 
we can obtain the desired perturbation. 
%
%Therefore the resulting flow $v'$ is desired.  

Suppose that $\mathrm{Sing}(v)$ is finite. 
%By Proposition 3.7 \cite{Y}, 
%the set $ \mathop{\mathrm{LD}}$ 
%is open 
%and so 
%$\mathop{\mathrm{Sing}}(v) \sqcup \mathop{\mathrm{Per}}(v) \sqcup \mathrm{P} = M -\mathrm{LD}$ 
%is closed. 
%
By Theorem 3\cite{CGL}, 
each singular point is 
either a center or a multi-saddle. 
By Corollary 2.9 \cite{Y}, 
the closed subset $\mathop{\mathrm{Sing}}(v) \sqcup \mathrm{P}$ 
consists of finitely many orbits. 
Let $D \subseteq \mathop{\mathrm{Sing}}(v) \sqcup \mathrm{P}$ be the subset of the multi-saddle connection 
which is the finite union of multi-saddles, multi-$\partial$-saddles,  
and separatrices in $\mathrm{P}$. 
%Then 
By Proposition 2.6 \cite{Y}, 
we have 
$D \sqcup \{ \text{center} \} \supseteq \mathop{\mathrm{Sing}}(v) \sqcup \mathrm{P}$. 
Assume that 
$D$ is essential. 
Let $\gamma \subseteq D$ be an essential simple closed curve. 
Then $\gamma$ contains $k$ multi-saddles for some $k \in \Z_{>0}$.   
Cutting this simple closed curve $\gamma$, 
then 
$M - \gamma$ has 
one or two new boundaries. 
\begin{figure}\label{fig01a}
\begin{center}
\includegraphics[scale=0.25]{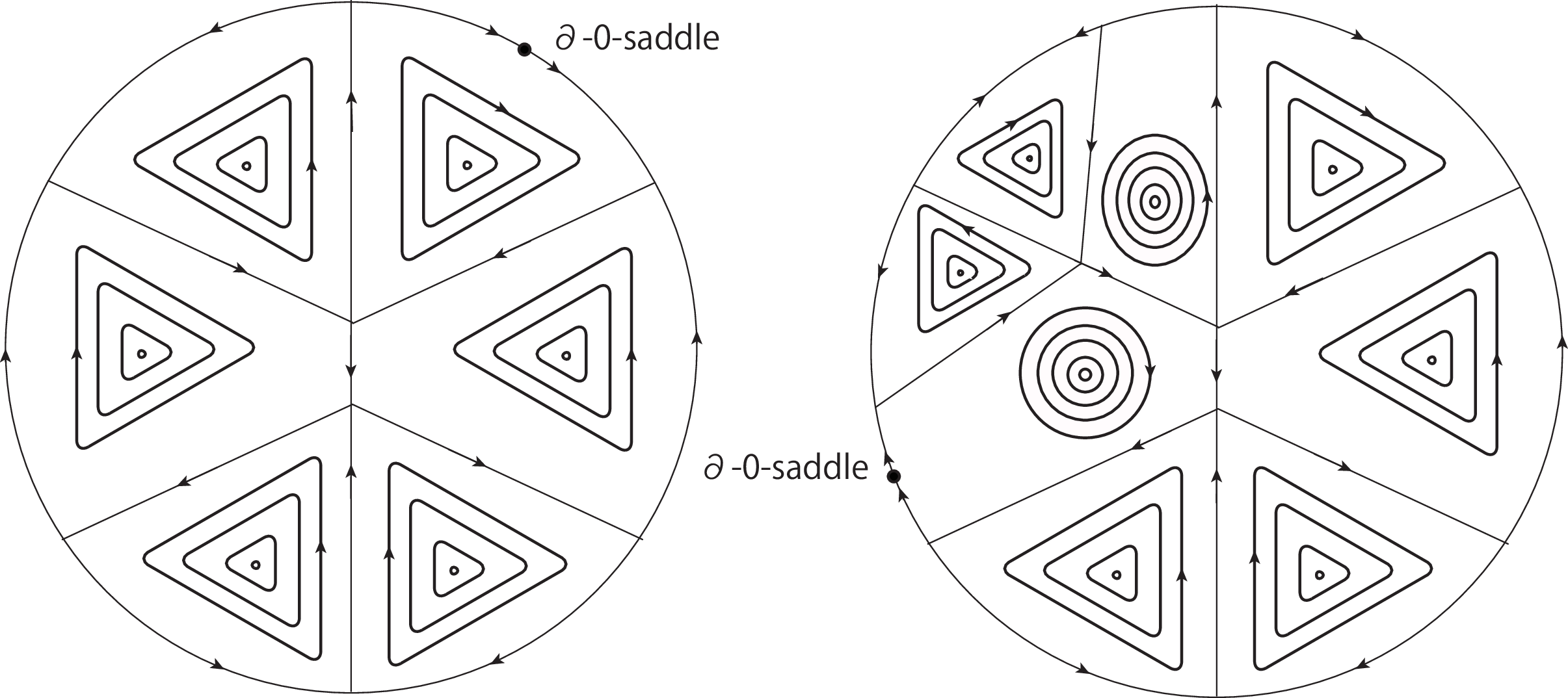}
\end{center} 
\caption{A disk with two (resp. three) $\partial$-saddles and one $\partial$-$0$-saddle}
%\caption{A disk with $2$ (resp. $3$) $\partial$-saddles and $1$ $\partial$-$0$-saddle}
\label{mdisk2}
\end{figure}
Adding one or two disks with $k - l$ $\partial$-saddles 
and $l$ $\partial$-$0$-saddles 
for some $l \in \Z_{\geq0}$
as in  Figure \ref{mdisk2} to the boundaries of $M - \gamma$,  
we obtain 
a new compact surface $M'$ which 
has the genus less than one of $M$ 
and the resulting non-wandering flow $v'$ on $M'$. 
Then the inductive hypothesis implies 
that 
$v'$ can be approximated by 
a flow $v'_1$ without locally dense orbits 
%in $\chi^r_{\mathrm{nw}}$ 
such that it preserves 
$\mathop{\mathrm{Sing}}(v') \sqcup \mathop{\mathrm{Per}}(v') \sqcup \mathop{\mathrm{P}}(v')$.   
Removing the new one or two disks
% with saddles 
and pasting the boundaries of $M - \gamma$, 
the resulting flow from $v'_1$ is a desired perturbation of $v$.  
Thus we may assume that 
$D$ is not essential.  
Then 
each simple closed curve in the multi-saddle connection diagram 
is non-essential. 
We can modify the closed transversal $C$ to satisfy 
$C \subset \mathrm{LD}(w)$.  
Indeed, 
let $\widetilde{D}$ be the union of $D$ and 
disks bounded by $D$. 
%By Corollary 2.9 \cite{Y}, 
Then 
this $\widetilde{D}$ 
consists of 
finitely many contractible closed disjoint subsets. 
Collapsing each connected component of $\widetilde{D}$ into 
a point, 
we obtain a new flow $w$ with $\mathrm{P}(w) = \emptyset$ 
and so we can modify the closed transversal $C$ such that 
$C \subset \mathrm{LD}(w)$. 
Blowing up collapsed points to original connected components, 
we have $C \subset \mathrm{LD}$.  
Thus the above perturbation can be chosen 
such that it preserves 
$\mathop{\mathrm{Sing}}(v) \sqcup \mathop{\mathrm{Per}}(v) \sqcup \mathrm{P} = M - \mathrm{LD}$.  
\end{proof}

The example in Example 1 \cite{Y} shows that 
the finiteness condition in Lemma \ref{lem02} 
is necessary. 
Recall that 
a homoclinic saddle connection is 
a saddle with two homoclinic separatrices,  
and 
a homoclinic $\partial$-saddle connection is 
a saddle connection which 
consists of $\partial$-saddles on a same boundary component 
and all separatrices of them which connect between 
the boundary component.  
A heteroclinic connection is 
a saddle connection 
with separatirices 
which connect 
either different boundaries 
or 
from a saddle and a different ($\partial$-)saddles. 
%a homoclinic saddle connection
%nor a homoclinic $\partial$-saddle connection.  
%a separatrix between different multi-($\partial$-)saddles. 
%$\partial$-saddles  with two homoclinic separatrices. 
%
We will show that 
arbitrarily small perturbations can break 
heteroclinic connections. 
The idea of the following proof is an analogy for the Hamiltonian vector fields  \cite{MW}.

\begin{lemma}\label{lem03}
Each continuous regular non-wandering flow can be approximated by 
%a sequence of 
a continuous regular flow whose complement of 
the set of periodic points 
is the union of finitely many centers and 
finitely many ($\partial$-)homoclinic saddle connections.  
%without heteroclinic connections.  
%in $\chi^r_{\mathrm{nw}}$ 
%forms a dense subset. 
% ($r \geq 0$). 
\end{lemma}

\begin{proof}
We may assume that 
$M$ is connected. 
Fix any regular flow $v \in \chi^0_{\mathrm{nw}}$. 
By Lemma \ref{lem02}, 
an arbitrary small perturbation makes 
$v$ into a flow without locally dense orbits. 
Thus we may assume that 
$v$ has no locally dense orbits. 
Then $\overline{\mathop{\mathrm{Per}}(v)} = M$
and 
$\partial \mathop{\mathrm{Per}}(v) =  \mathop{\mathrm{Sing}}(v) \sqcup \mathrm{P}$, 
where $\partial A := \overline{A} - \mathrm{int}A$ 
is the topological boundary of a subset $A \subseteq M$.  
By Corollary 2.7 \cite{G}, 
this non-wandering flow is topologically equivalent to 
a smooth flow. 
Using topologically equivalence, 
we may assume that 
$v$ is $C^{\infty}$. 
If $\mathop{\mathrm{Sing}}(v) = \emptyset$, 
then Corollary 2.9 \cite{Y} implies 
$\mathop{\mathrm{Per}}(v) = M$ 
and so $v$ is desired. 
Thus we may assume that 
$\mathop{\mathrm{Sing}}(v) \neq \emptyset$. 
First suppose that 
$M$ is closed. 
%
%Therefore 
%it suffices to show the $C^r$ case ($r \geq 1$) 
%and so we assume 
%$v$ is $C^1$. 
%%By Lemma \ref{lem00}, 
%%we may assume that 
%%$v$ is a regular non-wandering flow. 
%%By Theorem 3 \cite{CGL}, 
%%we have that 
Then 
each singular point is 
either a center or a saddle. 
%where 
%$\partial A := \overline{A} - \mathrm{int} A$ is the topological boundary of 
%a subset $A$ of $M$. 
%
By Corollary 2.9 \cite{Y}, 
each connected component of $\mathop{\mathrm{Per}}(v)$ is 
either an annulus or a M\"obius band.  
%a torus, 
%or a Klein bottle. 
%
Now we break heteroclinic connections by an arbitrary small 
perturbation. 
Indeed, 
fix a saddle point $p$ with a heteroclinic connection.  
Let $D$ be the saddle connection containing $p$. 
Then 
there is a saturated \nbd $V$ of $p$ such that 
each connected component of $V \setminus D$ 
is a saturated annulus or M\"obius band contained in $\mathop{\mathrm{Per}}(v)$. 
%and an arbitrary small \nbd $U$ of $p$. 
%By Hartman-Grobman Theorem, 
%Since the saddle connection diagram is 
%contained in $\partial \mathop{\mathrm{Per}}(v)$,  
Therefore 
there is an arbitrary small \nbd $U$ of $p$ 
%we may identify $U$ 
which can be identified with a square $[0,1]^2$ 
such that 
%.  
%%centered at the origin 
%%and radius $2$ in the $xy$-plane. 
%Also we may assume that  
%%$v$ is origin symmetric, 
$p$ is the origin $(0, 0)$ on $U$,  
the $x$-axis is the local stable manifold $W^s_v(p)$,  
and 
the $y$-axis is the local unstable manifold $W^u_v(p)$, 
and  that  
the set of orbits of $v$  is origin symmetric 
and 
axial symmetric with respect to 
the $x$-axis, 
the $y$-axis, 
and $I_{\pm}$
on $U$, 
where $I_{\pm} := \{ (x, \pm x) \mid x \in \R \}$.  
%Let $O_v(x, y)$ be the orbit through $(x, y)$ for $v$. 
%
Let $
I_{\sigma +} := \{\sigma1\} \times [0,1], 
%I_{-+} := \{ -1\} \times [0,1], 
I_{\sigma-} := \{\sigma1\} \times [-1, 0]$, 
%I_{--} := \{ -1\}  \times [-1, 0]$, 
$ 
J_{+\mu} := [0,1] \times \{ \mu1\}$, 
%J_{+-} := [0,1] \times \{ -1\}$, 
%J_{-+} := [-1, 0] \times \{ 1\}, 
and 
$J_{-\mu} := [-1, 0] \times \{ \mu1\}$ 
for any $\sigma, \mu  \in \{ -, +\}$.  
Then 
$\partial U = \bigcup_{\sigma, \mu  \in \{ -, +\}}(I_{\sigma \mu} \cup J_{\sigma \mu})$. 
Taking $U$ small, 
since orbits near $p$ move along the saddle connection containing $p$, 
we may assume that 
$v|_{M - \mathrm{int}U}$ maps 
$\mathrm{int} I_{\sigma \mu}$ 
for each $\sigma, \mu  \in \{ -, +\}$
into 
$\mathrm{int} J_{\sigma' \mu'}$ 
for some $\sigma', \mu'  \in \{ -, +\}$. 
Taking a reparametrization,  
we may assume that 
each orbit of $(\sigma c , \mu 1) \in \mathrm{int} I_{\sigma \mu}$ 
by $v|_{M - \mathrm{int}U}$ for each $c \in (0,1)$ 
goes back to 
$(\sigma' 1, \mu' c) \in J_{\sigma' \mu'}$. 
Then 
%We also may assume that 
%there is a small $a_0 > 0$ such that 
%$(1, b) \in O_v(b, 1)$ for $b \in (0, a_0)$ 
%%$(b, 1) \in O_v(1, -b)$ for $b \in (0, a_0)$,  
%and 
%%and that 
$U \cap (W^u_v(p) \cup W^s_v(p))  
= \{(0, y) \mid y \in (-1, 1) \} \cup 
\{(x, 0) \mid x \in (-1, 1) \}$. 
%Here $W^s_v(p)$ (resp. $W^u_v(p)$) is the stable (resp. unstable ) manifold of $p$ for $v$. 
%
%
Take a small open annulus $A$ centered at the origin. 
In fact, 
$A:= \{ (x, y) \mid x^2 + y^2 \in (\varepsilon/ 2, \varepsilon) \}$ 
for some small $\varepsilon > 0$. 
Let $v_{rot}$ be a $C^{\infty}$ flow whose support is $A$ 
and  each of whose regular orbits moves at a constant velocity 
and 
is a circle centered at the origin 
(i.e.  $\{ (x, y) \mid x^2 + y^2  =s \}$ for some $s  \in (\varepsilon/ 2, \varepsilon) $) 
such that 
the regular orbits of $v_{rot}$ move clockwise and 
$v' := v + v_{rot}$ is origin symmetric (see Figure \ref{rot2}), 
where $v + v_{rot}$ is the flow generated by a vector field $X_v + X_{v'}$. 
Here $X_w$ is the $C^{\infty}$ vector field whose flow is a $C^{\infty}$ flow $w$.  
Put $v'_U := v'|U$. 
%Fix a small number $a > 0$ such that 
Then there is a small number $a \in (0, 1)$ such that 
$\pm (1, a)
%, (-1, -a) 
\in W^s_{v'_U}(p)$, 
%and  
$\pm (a, 1)
%, (-a, -1) 
\in W^u_{v'_U}(p)$,  
%and 
%and that 
$(b, -1) \in O_{v'_U}(1, -b)$, 
and 
$(-b, 1) \in O_{v'_U}(-1, b)$  
for any $b \in(-a, a)$.  
\begin{figure}
\begin{center}
\includegraphics[scale=0.45]{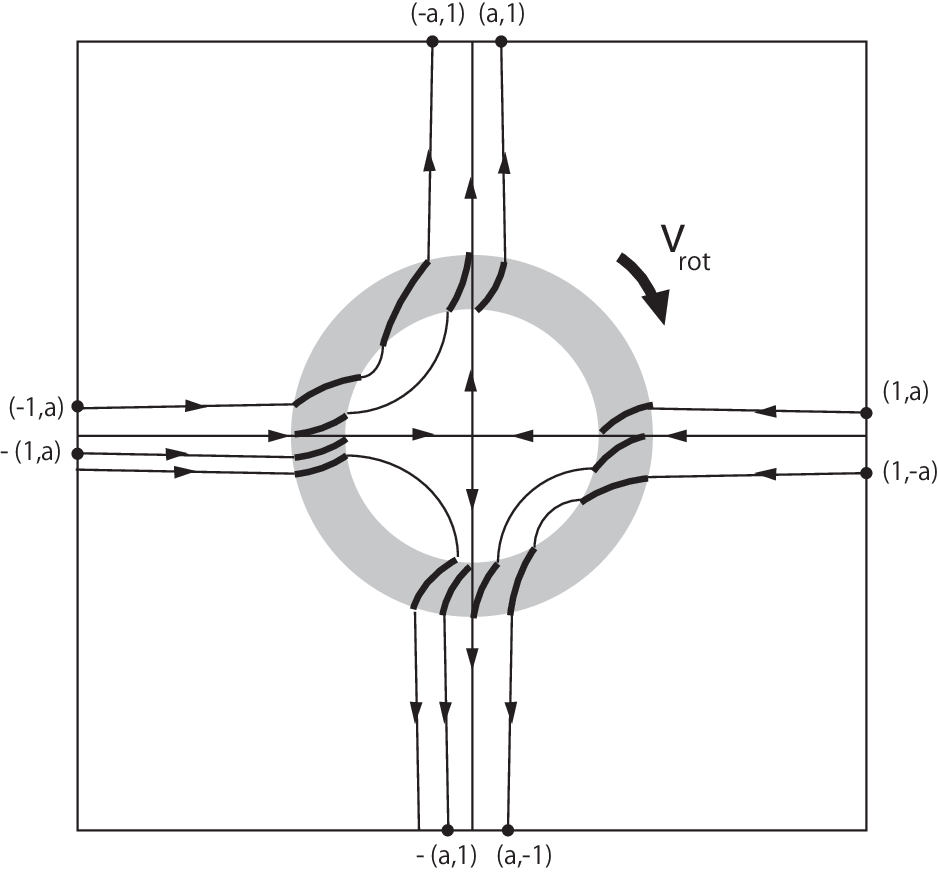}
\end{center}
\caption{Perturbation $v + v_{\mathrm{rot}}$}
\label{rot2}
\end{figure}
By taking $U$ small, 
we may assume that 
$U \cap (\cup_{\sigma, \sigma' \in \{ +, -\} } O_v( \sigma 1, \sigma' a ))$ 
consists of just four connected components. 
%\\
Since $O_v(\pm (1, \pm a))$ are periodic, 
we have that 
$W^s_{v'}(p)$ 
contains either $\pm (1, \pm a)$. 
Since $O_{v'}(\pm (1, \pm a))$ 
goes to either $p$ or $\pm (1, \pm a)$, 
we have that 
$W^u_{v'}(p)$ 
must go back to $p$. 
This means that 
$p$ is a homoclinic saddle connection. 
We will show that 
$v'$ is non-wandering. 
By symmetry, 
it suffices to show that 
each point contains in $\{ \pm (b, 1) \mid b \in (-a, a) \}$ 
is also periodic. 
Fix $b \in (0, a)$ and $\sigma, \mu \in \{ -, + \}$. 
By construction, 
we have that 
$- (\mu b, \sigma 1) \in O_{v'}(\sigma 1, \mu b)$ 
and  that 
$- (\mu b, \sigma 1)$ returns to either $(\sigma' 1, \mu' b)$  
for some $\sigma', \mu' \in \{ -, + \}$.   
%and so to either $\pm (b, \mp 1)$. 
%The periodicity of $\pm (b, \mp 1)$ with respect to $v$ 
This implies that 
$(\sigma 1, \mu b)$ is periodic with respect to $v'$. 
This shows that 
$v'$ is also non-wandering. 
Applying the above perturbation for all saddles, 
the resulting flow has no heteroclinic connections and 
is non-wandering. 
\begin{figure}
\begin{center}
\includegraphics[scale=0.45]{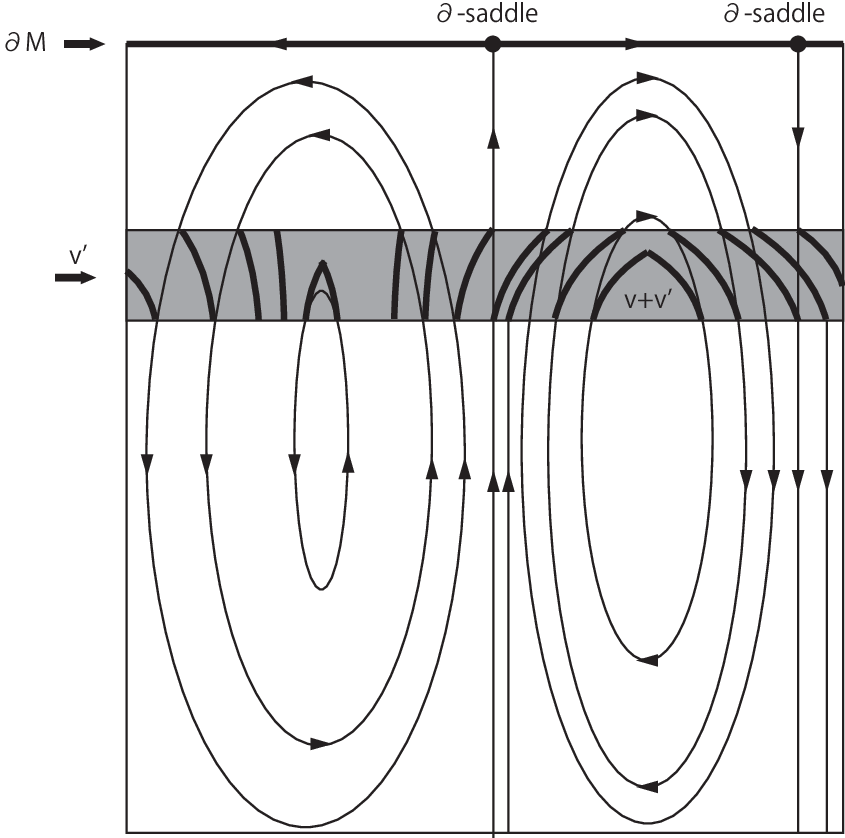}
\end{center}
\caption{Perturbation $v + v_{\mathrm{rot}}$ near the boundary}
\label{rot2a}
\end{figure}

Suppose that $M$ is not closed. 
As above, 
we may assume that 
there are no heteroclinic connections containing saddles. 
Thus it suffices to show that 
we can break a heteroclinic connection between $\partial$-saddles. 
Suppose that 
there is a heteroclinic connection $D$ between $\partial$-saddles. 
Then $D$ connects different boundaries. 
Considering a simple closed curve 
parallel to a boundary component of $D$ 
and taking a rotation as in Figure \ref{rot2a}, 
by a similar argument as above, 
we will show that 
we can break this heteroclinic connection $D$. 
%Iterating this process, 
%we can perturb $v$ into a desired flow. 
Indeed, 
let $B$ be a connected component of $\partial M$ 
contained in $D$. 
If there is a separatrix $\gamma$ 
connecting $\partial$-saddles on $B$, 
then collapse the disk bounded by $\gamma \cup B$.  
Thus we may assume that 
there are no separatrices between $\partial$-saddles  on $B$. 
Collapsing $B$ into one point, 
we obtain a new $k$-saddle $p$ for some $k \in \Z_{\geq 0}$. 
As above,  
there is a saturated \nbd $V$ of $p$ such that 
each connected component of $V \setminus D'$ 
is a saturated annulus or M\"obius band contained in $\mathop{\mathrm{Per}}(v)$, 
where $D'$ is the resulting saddle connection of $D$. 
By a coordinate change, 
there is an arbitrary small \nbd $U$ of $p$ 
which can be identified with an open unit disk 
$U = \{ (r \cos \theta, \sin \theta) \mid r \in [0,1), \theta \in [0, 2\pi) \}$ 
such that 
$p$ is the origin $(0, 0)$ on $U$,  
each $I_{4l}$ is the local stable manifold $W^s_v(p)$,  
and 
the $I_{4l +2}$ is the local unstable manifold $W^u_v(p)$, 
and  that  
the set of orbits of $v$  is origin symmetric 
and 
axial symmetric with respect to 
$I_{4l + m}$ 
on $U$ for any $l = 0, 1, \dots , k -1$ and $m =0, 1, 2, 3$,  
where $I_{4l + m} := \{ (r \cos \frac{(4l + m)2 \pi}{4k}, r \sin \frac{(4l + m)2 \pi}{4k}) \mid r \in [0,1) \}$.  
Let $\partial_j := \{ ( \cos (\theta + \frac{j \pi}{2k}), \sin  (\theta + \frac{j \pi}{2k})) \mid  \theta \in (0, \pi/2k) \} 
\subset \partial U$ and 
$T$ be the first return map on $\bigcup_{j =0}^{4k-1} \partial_j$ by 
$v|_{M - U}$.  
Replacing $U$ if necessary,  
we may assume that 
each $\partial_{4l +1}$ 
(resp. $\partial_{4l +2}$) is mapped by $T$ into some $\partial_{4l'}$ or $\partial_{4l' +3}$ isometrically.  
Let $v_{rot}$ be a $C^{\infty}$ flow whose support is a small open annulus in $U$ 
and  each of whose regular orbits moves at a constant velocity in 
the anti-clockwise direction  
and 
is a circle centered at the origin $p$.  
For any $(\cos \theta, \sin \theta)$ in the domain of $T$, 
write $(\cos \theta_T, \sin \theta_T) := T((\cos \theta, \sin \theta))$. 
Then 
$\theta \equiv \sigma \theta_T \mod \frac{\pi}{2k}$ 
for some $\sigma \in \{ -, + \}$.  
%for any $(\cos \theta, \sin \theta)$ in the domain of $T$. 
%Taking a slow rotation $v_{\mathrm{rot}}$ as above, 
Let $T'$ be the first return map on $\bigcup_{j =0}^{4k-1} \partial_j$ by 
$(v + v_{\mathrm{rot}})|_{U}$.  
Then 
there is a number $\theta_{a} \in (0, \pi/2k)$ 
such that 
$( \cos (\theta_a + \frac{4l \pi}{2k}), \sin  (\theta_a + \frac{4l  \pi}{2k}))$ 
is contained in the local stable manifold of $p$ for $v + v_{\mathrm{rot}}$ 
and 
$( \cos (- \theta_a + \frac{(4l +2) \pi}{2k}), \sin  (-\theta_a + \frac{(4l +2)  \pi}{2k}))$ 
is contained in the local unstable manifold of $p$.  
For any $(\cos \theta', \sin \theta')$ in the domain of $T'$, 
write $(\cos \theta'_{T'}, \sin \theta'_{T'}) := T'((\cos \theta', \sin \theta'))$. 
Then 
$\theta' \equiv  \sigma \theta'_{T'} \mod \frac{\pi}{2k}$ 
for some $\sigma \in \{ -, + \}$. 
As above, 
it's easy to check that 
the resulting flow $v + v_{\mathrm{rot}}$ 
is non-wandering and  
has less separatrices connecting $\partial$-saddles on different boundaries 
than $v$. 
%Iterating this process, 
%we obtain a desired flow. 
\end{proof}

%By  a $k$-saddle, 
%we mean a singular point   
%with $(2k + 2)$-separatrices. 
%
%Recall that 
%a subset is 
%($v$-)saturated 
%or a ($v$-)saturation 
%if 
%it is a union of orbits of $v$. 
%%
%Moreover, 
%a periodic orbit is 
%one-sided (resp. two-sided) if 
%there is (resp. is not) a \nbd of it which  
%is homeomorphic to a M\"obius band. 
We will show that 
regular non-wandering flows form 
a dense subset in $\chi^0_{\mathrm{nw}}$. 

\begin{lemma}\label{lem00} 
%$C^r$ regular non-wandering flows 
%form 
%a dense subset of $\chi^r_{\mathrm{nw}}$ 
%for each $r = 0$ or $1$. 
%%
%
Continuous non-wandering flows on a closed surface $M$ 
can be approximated by 
continuous regular non-wandering flows.  
\end{lemma}

\begin{proof} 
Fix any non-identical $v \in \chi^0_{\mathrm{nw}}$ 
and any distance $d$ induced by a Riemannian metric. 
As above, 
we may assume that 
there are no essential periodic orbits, 
because we can obtain a new non-wandering continuous flow 
by removing essential periodic orbits 
and pasting one or two center disks 
if exists.  
By Lemma \ref{lem02}, 
we may assume that 
$v$ has no locally dense orbits. 
Since $v$ is non-wandering, 
we have that 
%$\overline{\mathop{\mathrm{Per}}(v)} \supseteq M - \mathop{\mathrm{Sing}}(v)$ 
%and so that  
%%Then 
$M -(\mathop{\mathrm{Sing}}(v) \sqcup \mathop{\mathrm{Per}}(v)) = \mathrm{P}$ 
is nowhere dense. 
%
%By the flow box theorem  (cf. Theorem 1.1, p.45\cite{ABZ}), 
%taking a small perturbation for simple closed curves, 
%we may assume that 
%%such that 
%each tangency (resp. tangent arc) 
%is isolated in a simple closed curve 
%and so 
%each simple closed curve which contains no singular points 
%contains 
%at most finitely many tangencies and 
%tangent arcs. 
%
%
We will show that 
there is 
%are a number $\delta' > 0$ 
%and 
an open \nbd $U$ of $\mathop{\mathrm{Sing}}(v)$ 
with $\partial U \cap \mathop{\mathrm{Sing}}(v) = \emptyset$ 
such that 
$\partial U$ is homeomorphic to a finite disjoint union of circles 
and that 
each connected component of $\partial U$
either is a periodic orbit  
or consists of 
finitely many exterior tangencies contained in periodic orbits   
and of transverse points.  
%
%$\max_{t \in [-T, T]} d'_r( v_t|_{U} , \mathrm{id}) < \varepsilon/2$ 
%and 
%$\max_{t \in [-T, T]} d(v_t(x), y) < \varepsilon/2$  
%for 
%%any $t \in [-T, T]$ and for 
%any points $x, y \in U$ with $d(x, y) < \delta'$,  
Indeed, 
fix a small number $\varepsilon > 0$.  
%and a large number $T > 0$. 
%
%
%and a tangency to a curve $C$ means a point $p \in C$
%which has a small \nbd $N$ of it and a small number $s$ 
%such that  
%$C \cap \cup_{t \in (-s, s)}v_t(p) = \{ p \}$
%and 
%one connected component of $N - C$ contains $\cup_{t \in (-s, s)}v_t(p) \setminus C$. 
%Indeed, 
For $x \in  \mathop{\mathrm{Sing}}(v)$ and $r > 0$, 
let 
$B_{r}(x) := \{ z \in M \mid d(x, z) < r \}$. 
%Since $v|: [-t, t] \times M \to M$ for any $t \in \R$ is uniformly continuous, 
Then 
there is a large number $n > 0$
such that 
$\max \{  d(v_t(z), z) \mid t \in [-1/n, 1/n],   z \in B_{1/n}(x) \} < \varepsilon/2$.  
%Let $U_x := \{ y \in B_{1/n}(x) \mid \max_{t \in [-1/n, 1/n]} d( v_t(y) , y) < \varepsilon/2 \}$. 
% 
%and $D_{\e,t}(x) := \overline{B_{\e}(x)} \times [-t, t]$. 
%Since 
%$M \times [-T, T]$ and $\mathop{\mathrm{Sing}}(v)$ are compact, 
%%
%%$v|: [-T, T] \times M \to M$ is uniformly continuous, 
%there is a large number $N > 0$ such that 
%$\max \{  d_r(v_t|_{D_{1/N, T}(x)}, \mathop{\mathrm{id}}_{D_{1/N, T}(x)}) \mid (x, t) \in \mathop{\mathrm{Sing}}(v) \times [-T, T] \} < \varepsilon/2$, 
%where $d_r$ is the $C^r$ distance on the space of embedding from a cube ${D_{1/N, T}(x)}$ to $M \times [-T, T]$. 
%%, 
%%where 
%%$d'_0( v_t|_{U} , \mathrm{id}) := \max_{x \in U} d(v_t(x), x)$.  
%%and 
%%$d'_1( v_t|_{U} , \mathrm{id}) := \max_{x \in U} \{ d(v_t(x), x) ,
%%||D(v_t)_x|| - 1 \}$.  
%%Here $|| \cdot ||$ is the operation norm.  
%% 
%Let $U_x := B_{1/N}(x) \neq \emptyset$. 
%%\{ y \in B_{1/n}(x) \mid \max_{t \in [-T, T]} d( v_t(y) , y) < \varepsilon/2 \}$. 
%%By construction of $n$, 
%%we have  $U_x \neq \emptyset$. 
Let $U'_x \subset B_{1/n}(x)$ be an open \nbd of $x$ 
such that 
$\partial U'_x$ is homeomorphic to a circle 
and that 
$\overline{U'_x} \subset B_{1/n}(x)$. 
Since $\mathop{\mathrm{Sing}}(v)$ is compact, 
there are finitely many points $x_1, \dots , x_k \in \mathop{\mathrm{Sing}}(v)$ 
such that 
$U := U'_{x_1} \cup \cdots \cup U'_{x_k} \supset \mathop{\mathrm{Sing}}(v)$. 
Then 
$U$ is an open submanifold of $M$  
with 
$\partial U \cap \mathop{\mathrm{Sing}}(v) = \emptyset$ 
%$\partial U$ consists of finitely many disjoint circles 
such that 
$\partial U$ is homeomorphic to a finite disjoint union of circles. 
We may assume that $U$ 
has no orbit arcs in $U$ connecting different boundaries of $U$. 
Indeed 
assume that 
there is an orbit arc $\gamma$ 
connecting different boundaries of $U$.  
Since $\overline{\mathop{\mathrm{Per}}(v)} \supseteq M - \mathop{\mathrm{Sing}}(v)$, 
we may assume that 
$\gamma$ is 
two-sided and 
contained in a periodic orbit $O$. 
%Let $\gamma \subset O$ be 
%an arc which is contained in $U$ 
%and 
%connects different boundaries of $U$. 
%whose boundary is contained in $\partial U$. 
By the flow box theorem, 
% (cf. Theorem 1.1, p.45\cite{ABZ}), 
there is a thin saturated \nbd $V \subset \mathop{\mathrm{Per}}(v)$ of $O$. 
By removing the connected component $W$ of $U \cap V$ containing $\gamma$, 
the new \nbd $U - \overline{W}$ has less punctures than $U$. 
Thus finite removing operations imply that 
the resulting new \nbd has no periodic orbit connecting different boundaries. 
Replacing $U$ with this, 
the new \nbd $U$ has no orbit arc in $U$ connecting different boundaries of $U$. 

Let $O_j \subseteq \partial U$ be the periodic orbits.  
By removing a small color of $O_j$ consisting of  periodic orbits, 
we may assume that 
each $O_j$ are a connected component of $\partial U$. 
Then we will show that  
each connected component of 
$\partial U - \bigsqcup_j O_j$ has tangencies. 
Otherwise 
there is a connected component $C$ of $\partial U  - \bigsqcup_j O_j$ 
which has no tangencies. 
The non-wandering property 
implies that 
there is a point $x \in C$ which returns to $C$. 
This means the positive orbit of $x$ meets 
a different connected component of 
$\partial U$, which contradicts to the hypothesis. 

We may assume that 
there is no orbit arc $\gamma$ in $U$ which connects $\partial U$ 
such that $U - \gamma$ is connected,   
%(i.e. $[\gamma] \neq 0 \in \pi_1(U, \partial U)$), 
by removing the small open \nbd of $\gamma$. 
Moreover 
there is an open \nbd $V$ of $\partial U$  
which is homeomorphic to a finite disjoint union of open annuli  
such that $\overline{V} \cap \mathop{\mathrm{Sing}}(v) = \emptyset$. 
%Let $T$ be the set of tangencies on $\partial U$. 
%Since $ \mathop{\mathrm{Per}}(v)$ is open, 
%we obtain 
%$\partial U \setminus  \mathop{\mathrm{Per}}(v)$ is compact. 
Then there are 
connected open transverses $\gamma_1, \dots , \gamma_l$ in $V$ 
such that 
the saturation $W$ of $\gamma_1\cup \cdots \cup \gamma_l$ by $v|_{V}$ 
%has at most finitely many connected component 
%and 
contains $\partial U -  \bigsqcup_j O_j$. 
Then $\partial U \setminus W = \bigsqcup_j O_j$. 
%\subset \mathop{\mathrm{Per}}(v)$. 
Take a transverse arc with or without
%a transverse arc except 
one tangency 
in the saturation of each $\gamma_i$  by $v|_{V}$ 
as Figure \ref{transversal}. 
\begin{figure}\label{fig01b}
\begin{center}
\includegraphics[scale=0.4]{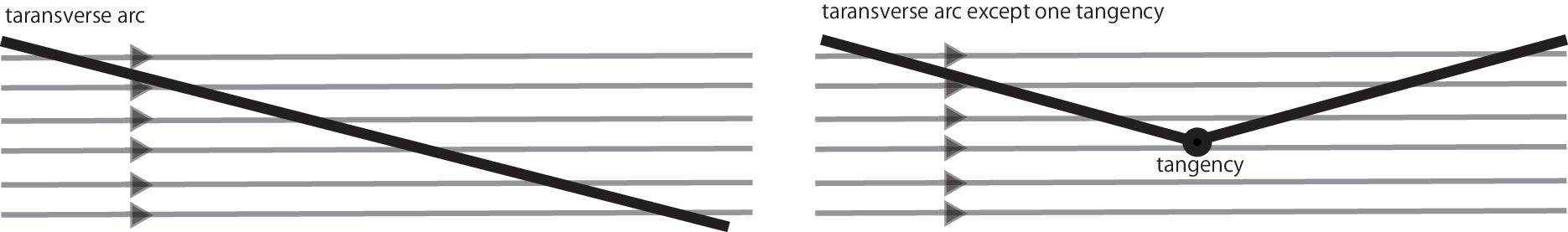}
\caption{a transverse arc and a transverse arc except one tangency }
\label{transversal}
\end{center}
\end{figure}
Connecting these transverse arcs (with one tangency), 
we can obtain a simple closed curve $C_i$ 
for each connected component of $\partial U - \bigsqcup_j O_j$ 
such that 
$C_i \subseteq W  \cap (V \cup U)$ 
has at most finitely many tangencies 
such that 
$\sqcup _i C_i$ is homotopic to 
%one of connected components of 
$\partial U - \bigsqcup_j O_j$ in $V \cup U$. 
Since $V$ is small, 
replacing $U$ with the open \nbd of $\mathop{\mathrm{Sing}}(v)$ 
whose boundary is 
$\bigsqcup_j O_j \sqcup \bigsqcup_i C_i$, 
%$U \cup {V}$, 
we may assume that $\partial U$ has at most finitely many tangencies. 
Since $\overline{\mathop{\mathrm{Per}}(v)} \supseteq M - \mathop{\mathrm{Sing}}(v)$, 
modifying $U$, 
we may assume that 
each tangency in $\partial U$ is 
two-sided 
and 
contained in periodic orbits. 
%
%By Corollary 2.9 \cite{Y}, we may assume that each tangency has a 
%saturated annular \nbd by a small perturbation. 
%
%
We will show that 
%an arbitrarily small perturbation 
we can modify $U$ to has no interior tangencies 
but has desired properties. 
Indeed, 
suppose that 
there is an interior tangency $p$ on 
a connected component $U'$ of $U$. 
First assume that 
$O_v(p) - \{ p \} \subset U'$. 
Fix a small closed saturated annular \nbd $A$ of $p$. 
Then 
$U' \setminus A$ has two boundaries which intersect to $A$ 
such that 
one of them is a periodic orbit 
and 
another boundary consists of 
one tangent arc, 
transverse points, 
and finitely many tangencies. 
Deduce $A$ to $B$ (see Figure \ref{B} and \ref{UB}), 
we can obtain a new open \nbd $U'' := U' \setminus B$ 
such that $U''$ has less interior tangencies than $U'$ 
and that 
one of the two new boundaries is a periodic orbit 
and 
another boundary consists of 
transverse points and finitely many tangencies. 
\begin{figure}\label{fig02}
\begin{center}
\includegraphics[scale=0.25]{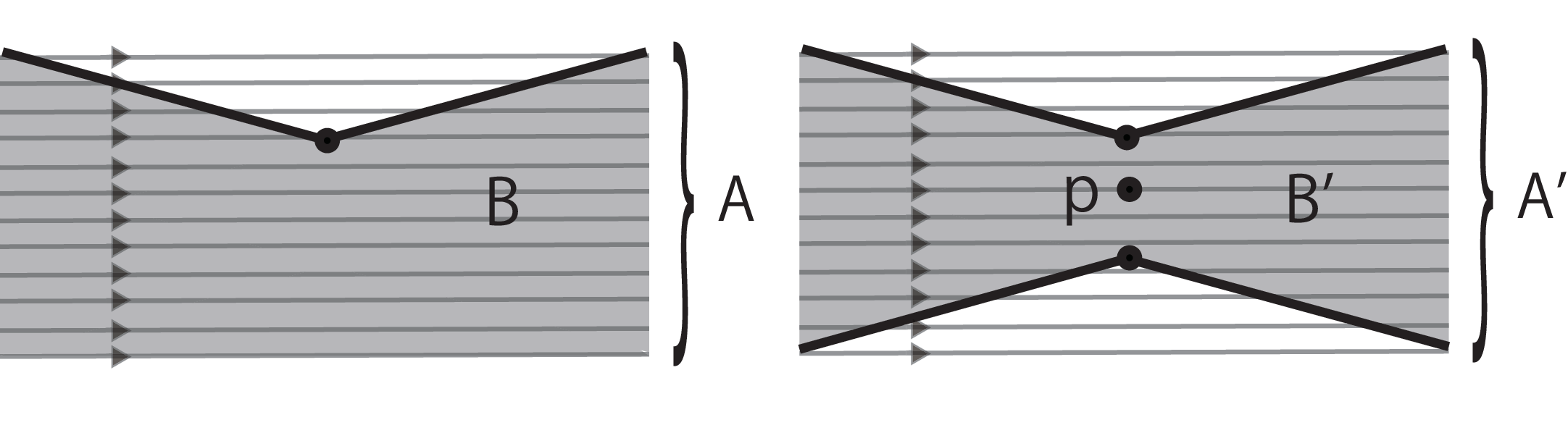}
\caption{A reduced \nbd $B$ (resp. $B'$) of $O_{v|_U}(p)$ }
%from $A$ (resp. $A'$)}
\label{B}
\includegraphics[scale=0.25]{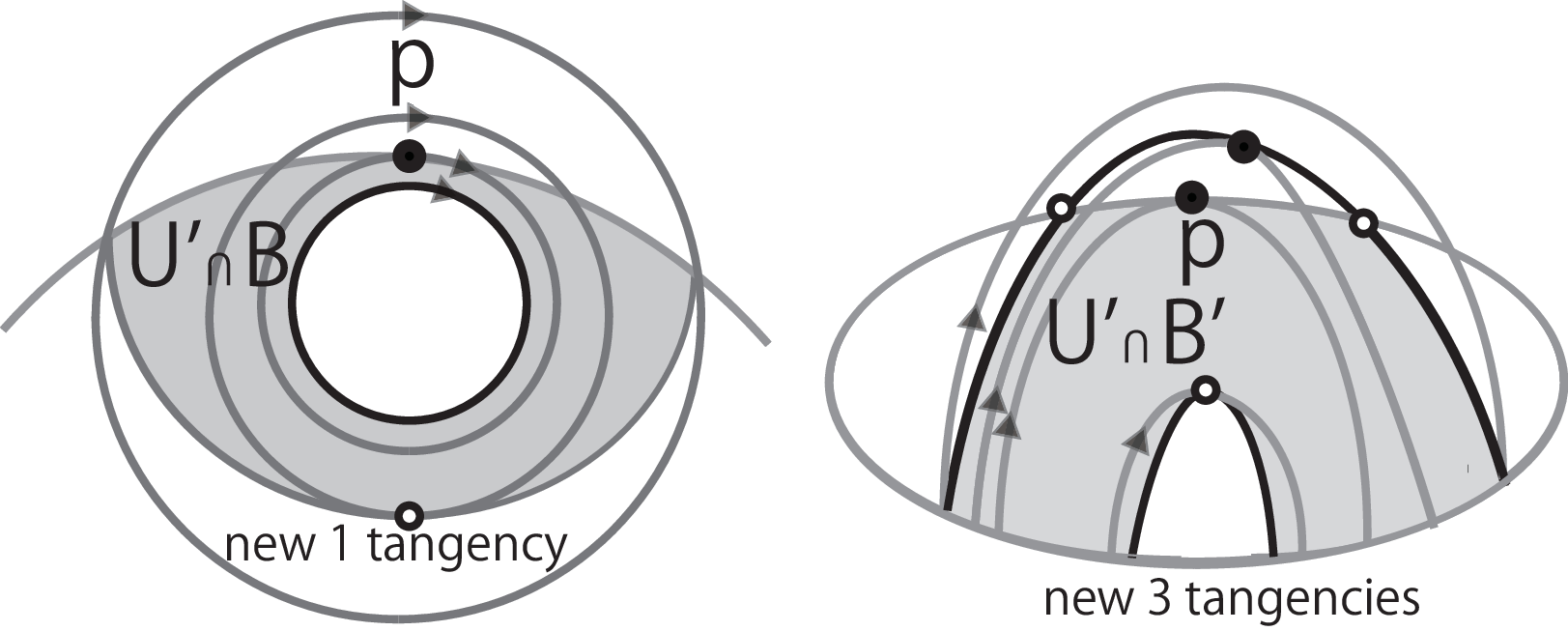}
\caption{A removing subset $U \cap B$ (resp. $U \cap B'$)}
\label{UB}
\end{center}
\end{figure}
%Note that 
%$U \cap B$ is an open annulus. 
%
Replace $U'$ with this \nbd $U''$. 
Second assume that 
$O_v(p) - \{ p \} \not\subseteq U'$. 
Then 
$O_v(p)$ intersects $\partial U'$.  
By the hypothesis, 
the orbit $O_v(p)$ connects the same boundary. 
Fix a small closed saturated annular \nbd $A'$ of $p$. 
%Then 
%$U' \setminus A$ consists of three components. 
Deducing $A'$ to $B'$ (see Figure \ref{B} and \ref{UB}), 
we can obtain a new open \nbd $U'' := U' \setminus B'$ 
such that $U''$ has less interior tangencies than $U'$ 
and that 
each of the two new boundaries consists of 
transverse points and finitely many tangencies.  
Replace $U'$ with this \nbd $U''$. 
By iterating this process, 
this $U'$ becomes desired.  
%because of $\overline{\mathop{\mathrm{Per}}(v)} \supseteq M - \mathop{\mathrm{Sing}}(v)$. 
Therefore 
applying this process to all boundaries of $U$, 
we obtain a new open \nbd  
each of whose boundary 
either is a periodic orbit 
or 
consists of 
transverse points and 
finitely many exterior tangencies. 
Hence this \nbd is a desired \nbd $U$.  

%
%
%Since $\overline{\mathop{\mathrm{Per}}(v)} \supseteq M - \mathop{\mathrm{Sing}}(v)$, 
%modifying $U$, 
%we may assume that 
%each tangency in $\partial U$ is contained in periodic orbits. 
%
Since $v|_{U}$ is small, 
if we replace $v|_{U}$ with a continuous flow on $U$ which is near to the identity 
and corresponds to $v$ near $\partial U$,  
then the resulting flow is near $v$. 
Therefore 
it suffices to show that 
there is a non-wandering regular continuous flow $w$ such that 
$w = v$ outside of $U$ and near $\partial U$.  
We may assume that 
there are no orbit arcs $\gamma$ in a connected component $U'$ of $U$ 
connecting $\partial U'$ 
such that each connected component of 
$\partial U' - \partial \gamma$ contains at least 
two tangencies. 
Indeed, otherwise 
there is such an arc $\gamma$ in $U'$. 
Since $\overline{\mathop{\mathrm{Per}}(v)} = M$, 
we may assume that 
$\gamma$ is contained in a periodic orbit. 
Taking a thin saturated \nbd $A'$ of $\gamma$ 
and reducing $A'$ into $B'$ as in Figure \ref{B}, 
the new \nbd $U' \setminus B'$ has less connecting arcs 
than $U'$ as above. 
Iterating this process, 
we can obtain a desired neighborhood.
Moreover 
we may assume that 
each orbit containing tangencies  for $U$ 
has no transverse points for $U$. 
Indeed, 
otherwise 
there is a periodic orbit $O$ containing a tangency $p$ 
on a connected component $U'$ of $U$  
has a transverse point on a connected component $U''$ of $U$. 
Taking a thin saturated \nbd $A$ of $O$, 
consider new \nbds $U' \setminus A$ and 
$U'' \setminus A$.  
Then the new \nbd $U' \setminus A$ 
has exactly one tangent arc. 
Reducing $A$ into $B$ near $p$, 
the boundary of $U' \setminus B$ consists of 
transverse points and finitely many exterior tangencies. 
On the other hand, 
$U'' \setminus A$ consists of two connected components $U''_1, U''_2$ 
such that 
$\partial U''_1$ has exactly one tangent arc  
and that 
$\partial U''_2$ consists of one tangency $p$, one tangent arc, and transverse points. 
Replacing $v|_{U''_2}$ 
with a non-singular flow $v''$ on $U''_2$ such that 
$v = v''$ near $\partial U'_2$ in $U''_2$, 
we may assume that 
$U'' \setminus A$ consists of one connected component 
$U''_1$ which has exactly one tangent arc. 
As above, reducing $A$ to $B$, 
we can obtain a new \nbd $U'' \setminus B$ consists of 
transverse points and finitely many exterior tangencies. 
Then the new reduced \nbd of $U$ 
has less orbits containing tangencies than $U$. 
Iterating this process, 
we can obtain a desired neighborhood.

Let $T$ be the union of periodic orbits containing tangencies for $U$. 
Then 
each connected component of $M - (T \cup U)$ is 
homeomorphic to either 
an annulus consisting of periodic orbits, 
a M\"obius band consisting of periodic orbits,  
or a flow box 
(i.e. a box $\{ (x, y) \mid x, y \in [0,1] \}$ with a vector field $\partial / \partial x$).  
In particular, 
each connected component of $M - (T \cup U)$ whose boundary contains transverse points of 
$\partial U$ is a flow box.  
%
%and 
%each saddle connection is closed with respect to $w$.  
%
%Since each exterior tangency of $U$ is contained in a periodic orbit, 
%there is a thin saturated \nbd $A$ of exterior tangencies of $U$ 
%such that  
%each connected component of $M - (A \cup U)$ is 
%homeomorphic to 
%either 
%an annulus consisting of periodic orbits, 
%a M\"obius band consisting of periodic orbits,  
%or a flow box 
%(i.e. a box $\{ (x, y) \mid x, y \in [0,1] \}$ with a vector field $\partial / \partial x$).  
%In particular, 
%each connected component of $M - (A \cup U)$ whose boundary contains transverse points of 
%$\partial U$ is a flow box.  
Thus we obtain a multi-graph $G$ whose 
vertices are connected components of $U$ 
and whose edges are connected components of $M - (T \cup U)$. 
Therefore 
if we obtain a new saddle connection diagram $D$ in $U$ 
such that 
each connected component of $\partial U \setminus T$ intersects 
exactly one separatrix in $U$, 
then we can modify $D$ in $U$ using $G$ 
%this saddle connection diagram in $U$ 
such that  
each separatrix in $D$ is  connected by an orbit of $v|_{M-U}$ 
to a separatrix in $D$ and so that 
the complement of the resulting saddle connection diagram in $M$ 
consists of annuli, M\"obius bands, and center disks. 
Hence we can 
%fill up in $U$ and so 
obtain a desired sufficiently slow continuous non-wandering flow $w$.  
%
%
%
%
%
%Note that 
%the orbit of each transverse point of $\partial U$ by $v|_{M - U}$ 
%connects between $\partial U$.  
%Notice that 
%if we obtain a new saddle connection diagram in $U$, 
%%
%then 
% 
%such that each separatrix in the saddle connection diagram in $M$ 
%connects between saddles, 
%then we can construct 
%a desired sufficiently slow continuous flow $w$, 
%because 
%$M - U \subseteq \overline{\mathop{\mathrm{Per}}(v)}$ and 
%$\partial U$ consists of transverse points and finitely many exterior tangencies. 
Therefore 
it suffices to show that 
there is such a saddle connection 
which is compatible with the 
orientation of $v|_{\partial U}$. 
Indeed, 
since $U$ has at most finitely many connected components, 
we may assume that 
$U$ is connected. 
By induction on the number $c_0$ of tangencies on $\partial U$, 
we will show this assertion.  
Note that 
the numbers of inward and outward transverse intervals are equal 
and so $c_0$ is even. 
Let $g$ be the genus 
%(resp. non-orientable genus) 
of $M$, 
$b$ the number of connected components of $\partial U$,  
$b_t$ the number of connected components of $\partial U - \bigsqcup_j O_j$,  
and $b_p$ the number of connected components of $\bigsqcup_j O_j$. 
Then $b = b_t + b_p$. 
%
%
%first we consider the following case:  
Suppose that 
$U$ is orientable.

1. case ( $g = 0$ and $c_0 = 0$): 
% $\partial U$ consists of periodic orbits) 
%First, consider the case $U$ has genus $0$.
%%
%%
%Suppose 
%each connected component of $\partial U$ is a periodic orbit. 
Suppose that 
$b = 0$. 
Then $U = M$ is a sphere 
and so 
we can replace $v|_{U}$ with 
a flow with two centers and periodic orbits on $M$.  
Suppose that 
$b = 1$. 
Then we replace $U$ with a center disk 
and obtain a desired flow on $U$. 
Suppose that 
$b > 1$. 
Let $b_-$ ($b_+$) be the 
number of periodic orbits in $\partial U$ 
 whose flow directions 
are clockwise (resp. anti-clockwise). 
Reversing the time if necessary, 
we may assume that 
$b_+ > 0$. 
Let $D$ be a center disk whose flow direction is 
anti-clockwise. 
Replace $b_+ - 1$ (resp. $b_-$) periodic orbits with 
$b_+ - 1$ figure eight 
(resp. $b_-$ inverse figure eight) 
saddle connections with a center disk 
and 
replace new centers with boundaries 
as Figure \ref{disk02}.
\begin{figure}\label{disk01}
\begin{center}
\includegraphics[scale=0.25]{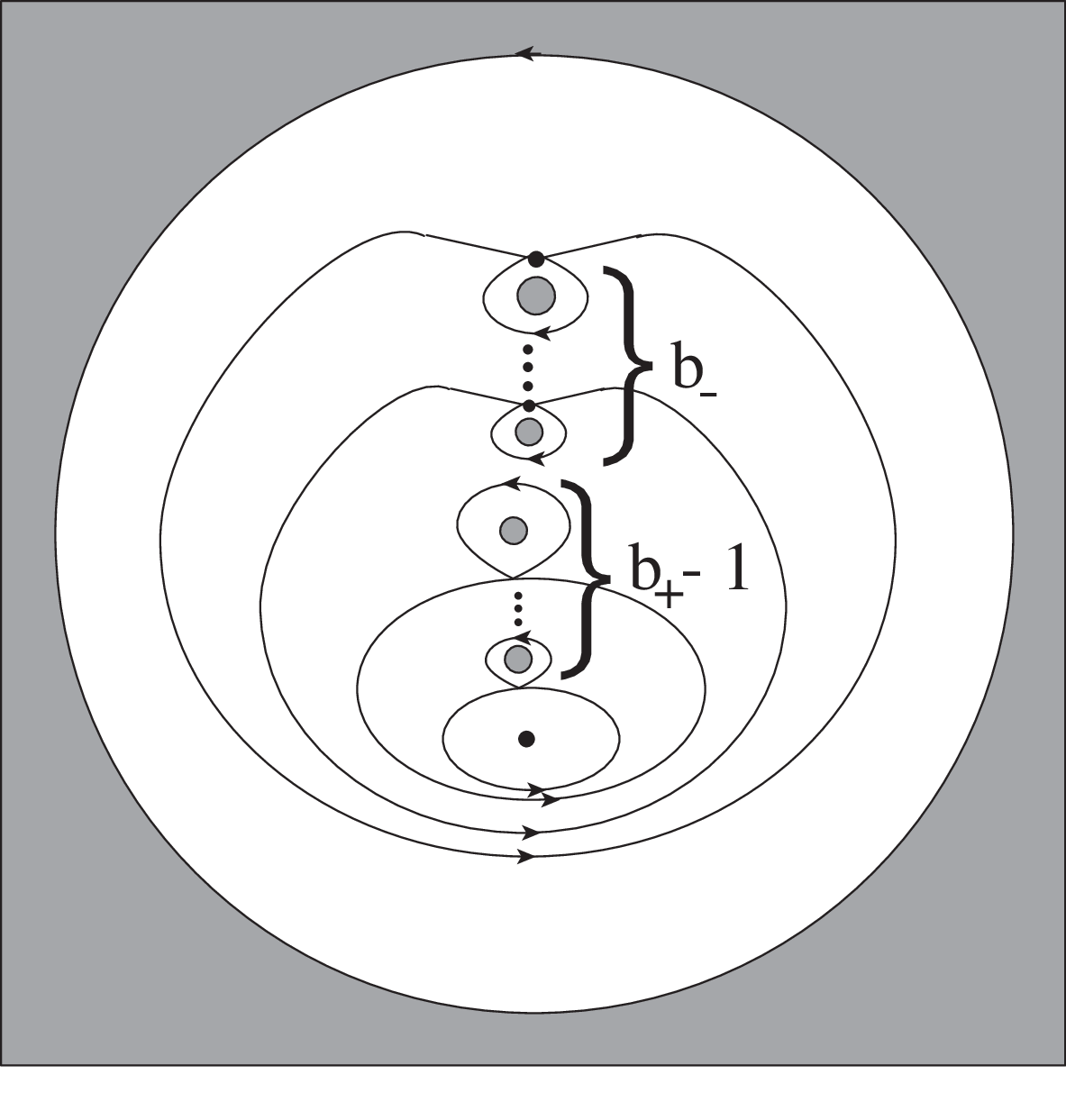}
\end{center}
\caption{A new $b -1$ punctured disk $D'$}
\label{disk02}
\end{figure}
Then we obtain a new $b -1$ punctured disk $D'$
whose boundaries are periodic orbits. 
Hence we replace $U$ with $D'$ 
and obtain a desired flow on a $b-1$ punctured open disk. 

2. case ( $g = 0$ and $c_0 > 0$ and $\bigsqcup_j O_j = \emptyset$): 
%each boundary of $\partial U$ has tangencies): 
%Thus 
%we may assume that 
%there is a connected component of $\partial U$ which 
%consists of transverse points and finitely many exterior tangencies. 
%
%
%Assume that 
%there are no boundaries of $\partial U$ which are periodic orbits. 
Suppose that $b = 1$. 
%$U$ has just one boundary. 
Then 
we can fill up $U$ by 
the flow which has just one singular point, 
which is a $(c_0 -2)/2$-saddle (see Figure \ref{rot}). 
By a small perturbation, 
we obtain a flow $v'$ with $(c_0 -2)/2$ (usual) saddles, 
which is desired. 
%Note that 
%we can choose that 
%each separatrix in the saddle connection diagram connects between saddles 
%by modifying connecting orbits in $U$, 
%and so that the resulting flow is non-wandering. 
%
%
%
Suppose that $b = 2$. 
Then 
$U$ has two boundaries whose numbers of tangencies are $c$ and $c'$. 
Let $v_c$ be a flow on an open disk $D_c$ with $c$ tangencies   
and 
$v_{c'}$ a flow on an open disk $D_{c'}$ with $c'$ tangencies as above. 
Fix two curves $\gamma_c$, $\gamma_{c'}$ contained in regular orbits of $v_c$, $v_{c'}$. 
Replace $\gamma_c$ (resp. $\gamma_{c'}$) with 
a saddle with 
two separatrices 
and 
one homoclinic separatrix 
which bounds a center disk. 
For instance, 
the former in Figure \ref{rot} is the case $c = 6$. 
Note that 
we must choose two center disks 
whose flow directions are opposite. 
Removing a small center disk in $D_{c}$ (resp. $D_{c'}$) 
and pasting the new two boundaries, 
the resulting flow can be considered as 
a flow on $U$ which is desired. 
Suppose that 
$b > 2$. 
%
%When $U'$ has at least three boundaries, 
Applying the above process for punctured disks at $ b -1$ times, 
%the number of the boundaries of $U$, 
we can obtain a desired flow. 

3. case ( $g = 0$ and $c_0 > 0$ and $b_p > 0$): 
%$\partial U$ contains periodic orbits): 
For $\partial U - \bigsqcup_j O_j$, 
construct a punctured disk $D'$ 
each of whose boundaries has tangencies 
as in the case 2. 
Replacing $b_p$ regular orbits in $D'$ with 
a saddle with two separatrices and 
one homoclinic separatrix which bounds a center disk 
and replacing centers with boundaries, 
we obtain a desired disk.  

4. case ( $g > 0$):  
%
%
%
%Finally, 
%consider the case with genus $g > 0$. 
%Recall that 
%$[\gamma] = 0 \in \pi_1(U, \partial U)$ 
%and one connected component of $\partial U - \partial \gamma$ 
%contains exactly one tangency 
%for any connected component $\gamma$ of the intersection of $U'$ 
%and an periodic orbit. 
%
%Consider a flow $v'$ on a punctured disk 
%such that $v'$ corresponds to $v$ near a \nbd of $\partial U'$.  
%Suppose that 
%$U$ is orientable. 
Assume that 
$b = 0$. 
Consider a sphere $D$ which consists of two centers 
and periodic orbits. 
%satisfies the boundary condition as the above cases. 
Fix $2g$ periodic orbits $\gamma_i$, $\gamma_i'$ contained in $2g$ regular orbits in $D$. 
Replace $\gamma_i$ (resp. $\gamma_i'$) with 
a homoclinic saddle connection.  
Note that 
%two separatrices of $\gamma_i$ are a same orbit 
%if $\gamma_i$ is a periodic orbit contained in $U$,  
%and that 
we need choose each pair of two center disks 
where  
the flow directions are opposite. 
Removing small center disks 
and pasting the boundaries of center disks for each $\gamma_i$ and $\gamma_i'$, 
the resulting flow is desired. 
Assume that 
$b \neq 0$. 
Consider a punctured disk $D$ as above cases which is desired except the genus condition. 
%satisfies the boundary condition as the above cases. 
Fix $2g$ curves $\gamma_i$, $\gamma_i'$ contained in $2g$ regular orbits in $D$. 
Replace $\gamma_i$ (resp. $\gamma_i'$) with 
a saddle with 
two separatrices 
and 
one homoclinic separatrix 
which bounds a center disk. 
Note that 
two separatrices of $\gamma_i$ are a same orbit 
if $\gamma_i$ is a periodic orbit contained in $U$,  
and that 
we need choose each pair of two center disks 
where  
the flow directions are opposite. 
Removing small center disks 
and pasting the boundaries of center disks for each $\gamma_i$ and $\gamma_i'$, 
the resulting flow is desired. 

Suppose that 
$U$ is non-orientable. 
Assume that 
$b = 0$. 
Consider a sphere $D$ which consists of two centers 
and periodic orbits. 
%satisfies the boundary condition as the above cases. 
Fix $g$ periodic orbits $\gamma_i$ contained in $g$ regular orbits in $D$. 
Replace $\gamma_i$  with a homoclinic saddle connection.  
%which bounds a center disk. 
%Note that 
%two separatrices of $\gamma_i$ are a same orbit 
%if $\gamma_i$ is a periodic orbit contained in $U$. 
%
Replacing small center disks with M\"obius bands which 
consists of periodic orbits, 
the resulting flow is desired. 
Assume that 
$b \neq 0$. 
Consider a punctured disk $D$ as above cases which is desired except the genus condition. 
Fix $g$ curves $\gamma_i$ contained in $g$ regular orbits in $D$. 
Replace $\gamma_i$  with a saddle with two separatrices 
and one homoclinic separatrix 
which bounds a center disk. 
Note that 
two separatrices of $\gamma_i$ are a same orbit 
if $\gamma_i$ is a periodic orbit contained in $U$. 
Replacing small center disks with M\"obius bands which 
consists of periodic orbits, 
the resulting flow is desired. 
\end{proof}

\begin{figure}\label{fig01}
\begin{center}
\includegraphics[scale=0.25]{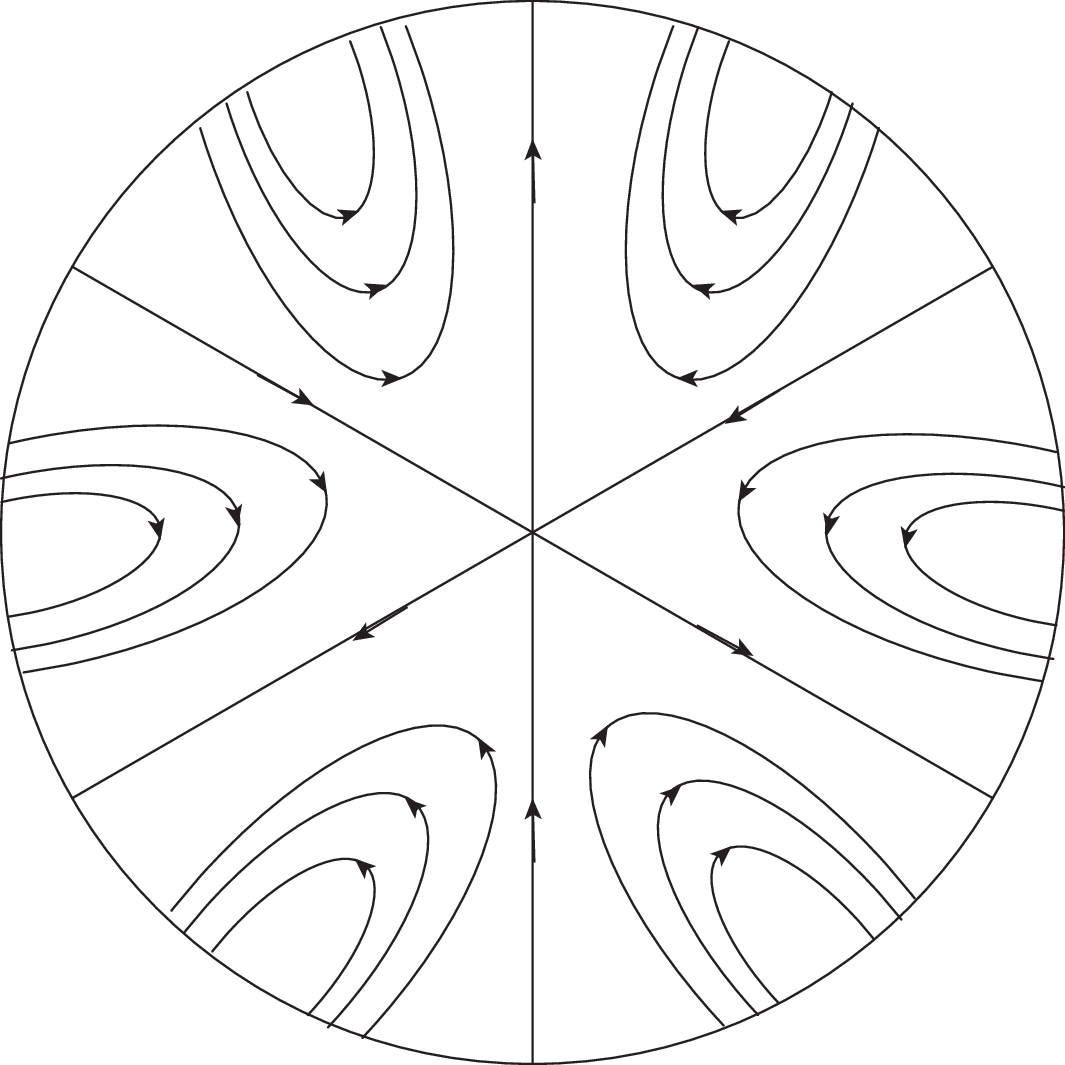}
\includegraphics[scale=0.35]{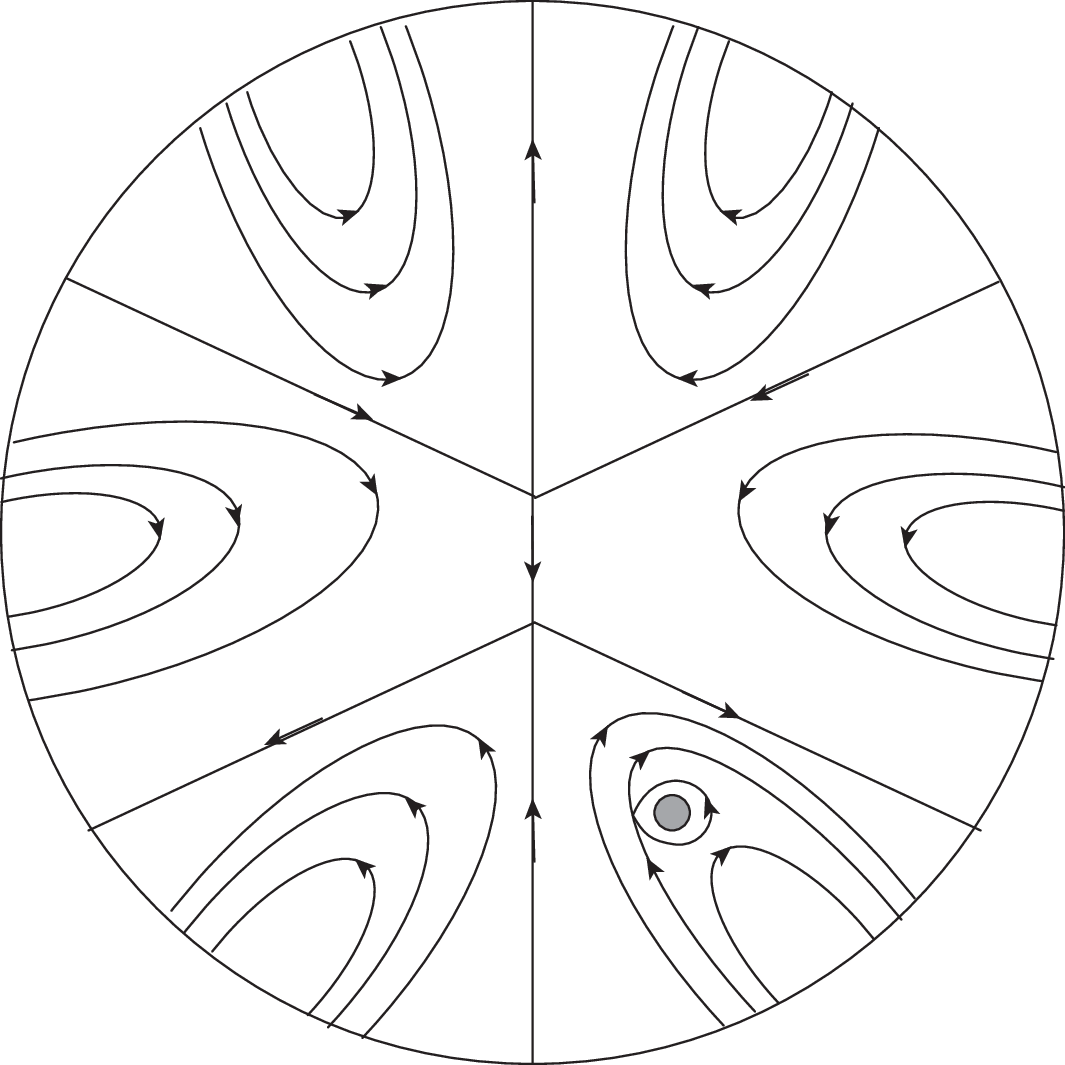}
\end{center}
\caption{A disk with a $2$-saddle and a disk with three $1$-saddles and one hole}
\label{rot}
\end{figure}

%\section{Main results}

Now we summarize the previous lemmas. 
%The previous lemmas imply the following result. 

\begin{theorem}\label{prop04}
Let $M$ be a closed surface.  
There is a dense subset $\mathcal{D}$ of $\chi^0_{\mathrm{nw}}$ 
such that 
$M - \mathop{\mathrm{Per}}(v)$ is the union of finitely many centers and 
finitely many homoclinic saddle connections 
for each $v \in \mathcal{D}$.  
\end{theorem}

The continuous structurally stability does not hold on 
the sphere $\S^2$ (resp. $\mathbb{P}^2$). 
%, \mathbb{K}^2$).  
Indeed, 
%if $r = 0$, then 
the regular singular point can be replaced 
by a closed ball with nonempty interior. 
However 
we will show that 
there are a few topologically stable flows on $\S^2$ (resp. $\mathbb{P}^2$) 
in the next section. 

\section{Topologically stability}

Recall that 
$v$ is topologically stable in $\chi^0_{\mathrm{nw}}$ if 
for any $w \in \chi_{nw}^0$ 
near $v$ with respect to the $C^0$-topology, 
there is a surjective continuous map 
such that 
the image of each $w$-orbit by it is an orbit. 
%which takes the orbits of $w$ onto the orbits of $v$. 
The surjective continuous map is called the 
semi-conjugacy. 
%A separatrix is an orbit whose $\omega$- limit set (resp. $\alpha$-limit set) 
%is a saddle point. 
To state the topologically stability of surface flows, 
we describe two lemmas.

\begin{lemma}\label{lem26}
Let $v, w$ be non-wandering continuous flows on a compact connected surface $M$ 
and $h : M \to M$ a semi-conjugacy 
which takes the orbits of $w$ onto the orbits of $v$. 
Suppose that 
$w$ is regular 
%and has 
%%neither heteroclinic connections 
%no locally dense orbits 
and 
that 
there are 
a point $x \in \mathop{\mathrm{Sing}}(v) \cap 
\overline{\mathop{\mathrm{Per}}(v)}$ 
and 
a $w$-saturated connected subset $W \subset \mathop{\mathrm{Per}}(w)$
such that 
$h(W)$ is a \nbd of $x$. 
Then $x$ is a center with respect to $v$. 
Moreover
if 
the image of a connected component $V$ of 
$\mathop{\mathrm{Per}}(w)$ 
contains at least two distinct centers with respect to $v$, 
then $h(V) = M$ is a sphere and 
$\mathop{\mathrm{Sing}}(v)$ consists of two centers. 
\end{lemma}

\begin{proof} 
Let $x$, $W$ as above. 
%Since the inverse image of a locally dense orbit by $h$ 
%is not closed,  
%there are no locally dense orbits of $v$ 
%and so 
%$\overline{\mathop{\mathrm{Per}}(v)} \supseteq M - \mathop{\mathrm{Sing}}(v)$. 
%
Since $x \in \overline{\mathop{\mathrm{Per}}(v)} \cap h(W)$, 
there are 
points $y, z \in W$ such that 
$h(y) = x$ and 
$h(z) \in \mathop{\mathrm{Per}}(v)$. 
Then 
there is an open saturated annulus $A \subset W$ 
such that 
$h(A)$ consists of periodic orbits of $v$, 
one of $\partial A$ is $O_w(z)$,  
and 
another of $\partial A$ is a periodic orbit of $w$ 
whose image by $h$ is a singular point of $v$.  
Note that 
$\overline{A}$ is a closed annulus 
such that 
$h(\overline{A}) = \overline{h(A)}$. 
Since $h(A)$ is a saturated annulus, 
the closure $\overline{h(A)}$ 
is a closed disk. 
%Suppose that 
If there is an open saturated annulus $A' \subset W$ 
such that 
$h(A')$ consists of periodic orbits of $v$, 
$\partial A'$ consists of two periodic orbits of $w$,  
and 
$h(\partial A')$ consists of two singular points of $v$, 
then the image $h(A')$ is a sphere 
which consists of two centers and periodic orbits. 
Thus 
we may assume that 
$h(\partial A')$ contains 
at most one singular point of $v$ 
for any  open saturated annulus $A' \subset W$ 
with $h(A') \subseteq \mathop{\mathrm{Per}}(v)$. 
Since $\overline{\mathop{\mathrm{Per}}(v)} \supseteq M - \mathop{\mathrm{Sing}}(v)$, 
we obtain that 
each connected component of $\mathop{\mathrm{Sing}}(v)$ 
intersects to $\overline{\mathop{\mathrm{Per}}(v)}$ 
and so 
the boundary of each connected component of $\mathop{\mathrm{Sing}}(v)$ in $h(W)$ 
is a center. 
This implies that 
each connected component of $\mathop{\mathrm{Sing}}(v)$ in $h(W)$ 
is a center. 
\end{proof}

%
%By  a $k$-saddle, 
%we mean a singular point  
%with $(2k + 2)$-separatrices. 

We state a non-existence of singular points except centers of topologically stable flows.

\begin{lemma}\label{lem3.3}
Let $v$ be a non-wandering continuous flow on a closed connected surface $M$. 
If $v$ is topologically stable in $\chi^0_{\mathrm{nw}}$, 
then $\mathop{\mathrm{Sing}}(v)$ consists of  
finitely many centers.  
\end{lemma}

\begin{proof} 
We may assume that $\mathop{\mathrm{Sing}}(v) \neq \emptyset$.  
Theorem \ref{prop04} implies that 
there are 
a regular flow $w \in \chi^0_{\mathrm{nw}}$ near $v$ 
and 
a semi-conjugacy 
$h: M \to M$ 
which takes the orbits of $w$ onto the orbits of $v$ 
such that 
$w$ has neither 
locally dense orbits 
nor heteroclinic connections. 
Since the inverse image of 
separatrices (resp. locally dense orbits) consists of 
separatrices (resp. locally dense orbits), 
the union $\mathrm{P}(v)$ consists of finitely many orbits 
and 
$v$ has neither 
locally dense orbits 
nor heteroclinic connections.  
%Then $\mathrm{P}(v) = M - (\mathop{\mathrm{Sing}}(v) \sqcup \mathop{\mathrm{Per}}(v))$ 
%consists of finitely many orbits.
%%
By Corollary 2.9 \cite{Y}, 
%we have that 
%each connected component of $\mathop{\mathrm{Per}}(w)$ 
%is an open annulus or a M\"obius band. 
%Since $M - \mathop{\mathrm{Per}}(w)$ 
%is a finite union of saddle connections 
%with respect to $w$, 
the subset $\mathop{\mathrm{Per}}(w)$ 
consists of 
finitely many annuli and M\"obius bands. 
Suppose that 
$v$ has infinitely many singular points. 
By Lemma \ref{lem26}, 
there are infinitely many centers with respect to $v$. 
Since $\mathop{\mathrm{Per}}(w)$ 
consists of finitely many annuli and M\"obius bands, 
there is a connected component of $\mathop{\mathrm{Per}}(w)$ 
whose image contains infinitely many centers. 
By lemma \ref{lem26}, 
we obtain 
$|\mathop{\mathrm{Sing}}(v)| = 2$, 
which contradicts.  
Thus $\mathop{\mathrm{Sing}}(v)$ is finite. 
By Theorem 3\cite{CGL}, 
each singular point is 
either a center or a multi-saddle. 
Finally, 
we will show that 
$v$ has no multi-saddles. 
Indeed, 
Lemma 2.4 \cite{Y} implies that 
$\overline{\mathop{\mathrm{Per}}(v)} = M$.  
Since $\mathop{\mathrm{Sing}}(v)$ is finite, 
each singular point is isolated. 
Suppose 
%there is a $k$-saddle $x$. 
%Assume 
there is a $0$-saddle $x$. 
%$x$ is a $0$-saddle. 
Removing $x$, 
we can obtain a flow $w$ near $v$ such that 
the multi-saddles (resp. the separatrices) 
of $v$ are fewer than 
those of $w$.  
%$|\mathop{\mathrm{Sing}}(w)| - |\mathop{\mathrm{Sing}}(v)| = 1$. 
This contradicts to the existence of a semi-conjugacy. 
Thus we may assume that there are no $0$-saddles of $v$.  
Suppose there is a $k$-saddle $x$ for some $k > 0$. 
%By Theorem 2.3 \cite{Y}, 
%For a saddle $x$, 
Then there is a connected component $A_x$ 
of $\mathop{\mathrm{Per}}(v)$ 
one of the 
whose boundaries 
contains at least two separatrices. 
For each connected component $A_y$ of $\mathop{\mathrm{Per}}(v)$ 
one of the whose boundaries contains at least two separatrices 
of a multi-saddle $y$, 
by any small perturbation, 
we can replace countably many periodic orbits $O'_n$ in $A_y$ 
%converging $y$ 
with homoclinic $0$-saddle connections $\{ y_n \} \sqcup O_n$ (where $O_n := O'_n - \{ y_n\}$)
such that 
$\lim_{n \to \infty} y_n = y$ and 
the $\omega$-limit set (resp. $\alpha$-limit set) of $O_n$ is $y_n$. 
%Then  for each multi-saddle $y$, 
%there is the sequence $(y_n)$ converges to $y$. 
Let  $w$ be the resulting flow. 
Note that 
there is no orbit-preserving continuous surjection  
from a periodic orbit to a multi-saddle. 
Fix a semi-conjugacy $h$ from $w$ to $v$. 
By the surjectivity of $h$, 
there are a $0$-saddle $z$ whose image of $h$ is a multi-saddle, 
and an open $w$-saturated annulus $A \subset \mathop{\mathrm{Per}}(w)$ 
whose boundary consists 
of a periodic orbit and 
of the homoclinic $0$-saddle connection of $z$   
such that 
one of connected components of $h(\partial A) =\partial (h(A))$ 
contains at least two separatrices of $h(z)$. 
Therefore 
the image of the homoclinic $0$-saddle connection of $z$ by $h$ 
contains at least two separatrices of $h(z)$, 
which is impossible.  
%, denoted by $\partial_- A$,   
%such that 
%$h(\partial_- A) \subset h(\partial A) =\partial (h(A))$ 
%there are $0$-saddles $z, z'$ whose images of $h$ are a multi-saddle, 
%and an open saturated annulus $A$ 
%whose boundary 
%is the union of homoclinic $0$-saddle connections of $z$ or $z'$ 
%such that 
%one of $h(\partial A) =\partial (h(A))$ 
%contains 
%%whose image contains 
%at least two separatrices of $h(z)$, 
%which is impossible.  
\end{proof}

We state the topological stability on the sphere $\S^2$ 
(resp. the projective plane $\mathbb{P}^2$, 
the Klein bottle $\mathbb{K}^2$).

\begin{proposition}\label{prop33}
Let $v$ be a non-wandering continuous flow on the sphere $\S^2$ 
(resp. the projective plane $\mathbb{P}^2$). 
Then 
$v$ is topologically stable in $\chi^0_{\mathrm{nw}}$  
if and only if 
$v$ consists of two centers (resp. one center) and periodic points. 
%neither heteroclinic connections nor contractible separatrices.  
\end{proposition}

\begin{proof} 
Suppose that 
$v$ consists of two centers (resp. one center) 
and periodic points. 
Taking any small perturbation, 
let $v'$ be the resulting non-wandering flow. 
Then  $\mathop{\mathrm{Per}}(v')$ is not empty 
and so we can easily construct a semi-conjugacy, 
by 
collapsing non-periodic orbits into centers.  
Conversely, 
suppose that 
 $v$ is topologically stable in $\chi^0_{\mathrm{nw}}$.   
%We will show that 
%$v$ is regular. 
%% and has no heteroclinic connections. 
%Indeed, 
Lemma \ref{lem3.3} implies that 
$\mathop{\mathrm{Sing}}(v)$ consists of  
finitely many centers.  
Poincar\'e-Hopf theorem implies that 
the singular points are just two centers (resp. one center). 
This implies that 
$v$ consists of two centers (resp. one center) and periodic points. 
\end{proof}

\begin{proposition} \label{prop34}
Let $v \in \chi^0_{\mathrm{nw}}$ be a flow on 
the Klein bottle $\mathbb{K}^2$. 
Then 
$v$ is topologically stable in $\chi^0_{\mathrm{nw}}$  
if and only if 
$v$ consists of periodic points. 
\end{proposition}

\begin{proof} 
Suppose that 
$v$ is topologically stable in $\chi^0_{\mathrm{nw}}$. 
By Lemma \ref{lem3.3}, 
Poincar\'e-Hopf theorem implies that 
$v$ consists of periodic points. 
Conversely, 
suppose that 
$v$ consists of periodic points. 
Then $\mathbb{K}^2$ is a union of annuli and M\"obius bands. 
%Notice that  
%a union of one saturated M\"obius band and one saturated annulus with one intersection 
%is a M\"obius band 
%and that 
%a union of two saturated annuli with one intersection 
%is an annulus. 
%Since an annulus (resp. a M\"obius band) has two boundaries (resp. one boundary), 
%we have that 
%$\mathbb{K}^2$ is a union of two saturated M\"obius bands. 
Thus 
each pointwise periodic flow on $\mathbb{K}^2$ 
is topologically equivalent. 
Since each flow near $v$ consists of periodic points, 
we obtain that 
$v$ is topologically stable in $\chi^0_{\mathrm{nw}}$.  
\end{proof}

Recall the orbit space $M/v$ is 
the quotient space $M/\sim_v$ of $M$ by 
an equivalent relation $\sim_v$, 
where $x \sim_v y $ if 
$O_v(x) = O_v(y)$.  
%
%The above propositions 
%implies that 
%the following corollary. 
%
We characterize the topological stability for 
non-wandering continuous flows 
on closed connected surfaces.

\begin{proposition} 
Let $M$ be a closed connected surface. 
A non-wandering continuous flow 
on $M$  
is topologically stable 
if and only if 
the orbit space of it is homeomorphic to 
a closed interval. 
In particular,  
$M$ has a topologically stable flow in $\chi^0_{\mathrm{nw}}$ 
if and only if 
$M$ is either $\S^2$, $\mathbb{P}^2$, or $\mathbb{K}^2$. 
\end{proposition}

\begin{proof}
Suppose that 
$M$ has a topologically stable flow in $\chi^0_{\mathrm{nw}}$.  
By Lemma \ref{lem3.3}, 
the surface $M$ is homeomorphic to 
either 
$\T^2$, $\S^2$, $\mathbb{P}^2$, or $\mathbb{K}^2$. 
Assume that $M \cong \T^2$. 
Then there is a topologically stable flow $v$ on $\T^2$ in $\chi^0_{\mathrm{nw}}$ 
which is topologically equivalent to 
a rational (resp. irrational) rotation.  
By arbitrary small perturbation, 
the flow $v$ can become topologically equivalent to 
an irrational (resp. rational) rotation, 
which contradicts to the topological stability.  
The converse holds by previous two propositions. 
Moreover 
%As above, 
the orbit space of a topologically stable non-wandering continuous flow 
on a closed connected surface is homeomorphic to 
a closed interval. 
Finally, 
%Conversely, 
suppose that 
the orbit space of a non-wandering continuous flow $v$ 
is homeomorphic to a closed interval. 
Assume that $\mathrm{P} \neq \emptyset$. 
Fix any point $x \in \mathrm{P}$. 
By Proposition 2.6 \cite{Y}, 
the closure $\overline{ O_v(x)}$  
contains a point $y \in \mathop{\mathrm{Sing}}(v)$ 
and so  
$M/v$ is not $T_1$, which contradicts to the hypothesis. 
Thus $\mathrm{P} = \emptyset$ 
and so 
$M = \mathop{\mathrm{Sing}}(v) \sqcup \mathop{\mathrm{Per}}(v)$. 
Since $M/v$ is a closed interval, 
we obtain 
$\mathrm{int} \mathop{\mathrm{Sing}}(v) = \emptyset$ 
and so 
$M = \overline{\mathop{\mathrm{Per}}(v)}$. 
First suppose that  $\mathop{\mathrm{Sing}}(v) = \emptyset$. 
Then $M = \mathop{\mathrm{Per}}(v)$ is either 
a torus or a Klein bottle. 
If $M$ is a torus, then $M/v \cong \S^1$, which contradicts to the hypothesis. 
Thus $M$ is a Klein bottle.  
%By Proposition \ref{prop34}, 
%the flow $v$ is topologically stable. 
Second suppose that  $\mathop{\mathrm{Sing}}(v) \neq \emptyset$. 
Assume that 
there is a connected component $U$ of $\mathop{\mathrm{Per}}(v)$ 
which is homeomorphic to a M\"obius band. 
Then $U$ in $M/v$ is homeomorphic to $[0,1)$ 
and $\overline{U} -U$ is a singular point. 
This means that 
$v$ consists of one center and periodic points
and so 
$M \cong \mathbb{P} ^2$.   
%By Proposition \ref{prop33}, 
%the flow $v$ is topologically stable. 
Assume that 
there is a connected component 
$U$ of $\mathop{\mathrm{Per}}(v)$ 
which is not homeomorphic to a M\"obius band. 
Then 
each connected component of $\overline{U} -U$ is a singular point. 
Since $M/v$ is a closed interval, 
the flow $v$ consists of two centers and periodic points 
and so
$M$ is a sphere.   
%By Proposition \ref{prop33}, 
%the flow $v$ is topologically stable. 
\end{proof}

%By Poincar\'e-Hopf theorem, 
%we can state the following statement. 
% implies that 
%there are no topologically stable non-wandering flows  
%in $\chi^0_{\mathrm{nw}}$ 
%on $M$. 
Summarize the above statements. 

\begin{theorem}\label{}
A closed connected surface 
$M$ has a topologically stable 
continuous non-wandering flow in $\chi^0_{\mathrm{nw}}$ 
if and only if 
$M$ is either 
$\S^2$, $\mathbb{P}^2$, or $\mathbb{K}^2$.  
Moreover 
the following are equivalent for 
a continuous non-wandering flow $v$ on $M$: 

1. $v$ is topologically stable in $\chi^0_{\mathrm{nw}}$. 

2. 
$M/v \cong [0,1]$. 

3. 
$M \neq \T^2$ consists of periodic orbits and centers. 
\end{theorem}

%
%
%
%On the other hand, 
Finally, 
we state the genericity of 
topologically stable non-wandering flows 
with locally dense orbits.

\begin{proposition} 
%Let $M$ be .  
%which is neither the projective plane nor the Klein bottle. 
%Then there are no topologically stable non-wandering flows  
%in $\chi^0_{\mathrm{nw}}(M)$.   
%%for any $r  = 0$ or $1$. 
%Moreover 
%if $M$ is orientable, then 
Each non-wandering flow on an orientable connected closed surface with positive genus 
%$M$ 
can be approximated by regular non-wandering flows with 
locally dense orbits and 
without heteroclinic connections. 
\end{proposition}

%The following proof is analogous to 
%the proof of  Theorem 3.1.4 \cite{MW}.  

\begin{proof} 
Let $v$ be a non-wandering flow on $M$. 
By Theorem \ref{prop04}, 
we may assume that 
$v$ is regular such that  
$M - \mathop{\mathrm{Per}}(v)$
%the complement of the set of periodic points 
is the union of finitely many centers and 
finitely many homoclinic saddle connections. 
By Corollary 2.9 \cite{Y}, 
each connected component of 
$\mathop{\mathrm{Per}}(v)$ is 
an annulus. 
By a topologically conjugate, 
we may assume that 
$v$ is $C^{\infty}$. 
By induction on the genus $g$ of $M$, 
we show the assertion. 
Suppose that 
$g = 1$. 
Then 
$M$ is a torus. 
Let  $\gamma'$ be an essential saddle connection. 
Since each connected component of 
the complement of $\gamma'$ is an annulus or a disk, 
we have that 
all the saddle connections are 
homologically linearly dependent 
to $\gamma'$ 
and that 
there is an essential periodic orbit $\gamma$ of $v$ 
which is homologically linearly dependent to $\gamma'$ 
(see Figure \ref{ss}).  
%either contractible or 
%homologous to $\gamma$ up to multiplicative nonzero constant 
%and 
%the complement of saddle connections consists of 
%boundary points and singular points. 
Then $M - \gamma$ is homeomorphic to an open annulus. 
Note that 
each essential homoclinic saddle connection is of form 
either $\mu$ or $\gamma'$ as in Figure \ref{ss}. 
Therefore the restriction $v|_{M - \gamma}$
is topologically conjugate to a flow of 
a Hamiltonian vector field.  
This implies that 
$v$ is topologically conjugate to a flow of 
a divergence-free vector field.  
Take a simple closed curve $C$ which 
intersects $\gamma$
and 
is homologically linearly independent 
to $\gamma$ 
such that $C$ meets 
neither centers nor center desks whose 
closures are contractible 
and that 
$C \cap M'$ is connected for any connected component $M'$ of $M - A$, 
where $A$ is the union of the saddle connections which bound
the center disks  each of whose closure is essential. 
Since  the sum of two divergence-free vector fields is divergence-free, 
we can construct 
a sufficiently slow flow $v'$ whose support is a small annular \nbd of $C$ 
and which consists of periodic orbits 
and singular points 
such that 
$v + v'$ is non-wandering with 
$\mathop{\mathrm{Sing}}(v) = \mathop{\mathrm{Sing}}(v + v')$ 
and the closure of each center disk for $v + v'$ is contractible. 
%Let $D$ be the union of open center disks bounded by $A$. 
%
%
%
%because $v$ is topologically conjugate to the flow of a divergence-free vector field  
%and the sum of two divergence-free vector fields is divergence-free.  
%the restriction of $v'$ on the intersection of center disks 
%is symmetric. 
%Then 
%each connected component of $D - \mathrm{Supp}(v')$ 
%containing a center with respect to $v$ 
%is a subset of a center disk of $v + v'$. 
%belongs to 
%%consists of closed orbits of 
%$\mathop{\mathrm{Per}}(v + v') \sqcup \mathop{\mathrm{Sing}}(v + v')$. 
We may assume that 
$v+ v'$ has no locally dense orbits. 
Let $D'$ be the finite union of 
center disks for $v + v'$. 
%Then the closure of each center disk for $v + v'$ is contractible. 
Collapsing each center disk for $v + v'$ into a point 
and removing the new $0$-saddles, 
the resulting flow is a rational rotation.  
%
%By construction, 
Therefore 
there is a closed transversal $\gamma''$ for $v + v'$ 
which is near and 
homotopic to $\gamma$ 
such that $\gamma'' \cap \overline{D'} = \emptyset$ 
and 
each essential periodic orbit of $v + v'$ in $\mathop{\mathrm{Per}}(v + v')$ 
intersects $\gamma''$.  
%and   
%whose regular orbits are periodic orbits. 
Then the first return map of $v + v'$ on $\gamma''$ is 
a rotation except at most finitely many points. 
By an arbitrary small perturbation along $\gamma''$, 
% by a rotation, 
we may assume that  
%$v + v'$ is non-wandering 
%and  
%By an arbitrary small perturbation, 
%we may assume that 
$v + v'$ 
contains locally dense orbits. 
%, 
%by choosing that  
%$\mathrm{supp}(v')$ is contained in 
%a small annular \nbd of $C$ 
%and that 
%the intersection of $\mathrm{supp}(v')$ and 
%center disks is contained in 
%a small \nbd of the saddle connections. 
This  implies that 
$v$ can be approximated by regular non-wandering flows with 
locally dense orbits and 
without heteroclinic connections. 
%contradicts to 
%the structurally stability of $v$. 
Suppose that $M$ has genus $g(M) > 1$. 
%Assume that $M$ is orientable. 
%
Since $v$ is regular and has a singular point, 
there is an essential saddle connection $C$. 
Since $\partial \mathop{\mathrm{Per}}(v) \supset \mathrm{P}$  
and $\mathop{\mathrm{Per}}(v)$ is the 
union of disjoint open annuli, 
there is an essential simple closed curve $\gamma \subseteq C$ 
through a saddle. 
% in  $C$. 
Adding two disks with 
%$k - l$ $\partial$-saddles and $l$ 
two $\partial$-$0$-saddles 
to the new two boundaries of $M - \gamma$, 
% for some $l$, 
we obtain the new surface $M'$ and 
the new non-wandering flow $w'$ on $M'$. 
Since the genus $g(M')$ is less than $g(M)$, 
by inductive hypothesis, 
$w'$ can be approximated by a regular non-wandering flow $w''$ with 
locally dense orbits and without heteroclinic connections. 
By above construction, 
$w'$ and $w''$ coincide on 
a small \nbd of the union of centers each of whose closures is contractible. 
%Then 
%we may assume that 
%the small \nbd contains the new two center disks. 
Therefore 
$w''$ can be lifted to the non-wandering flow on $M$ with 
locally dense orbits which approximates $v$. 
%
%
%Assume that $M$ is non-orientable. 
%Since $v$ is regular and has a singular point, 
%there is a one-sided essential simple closed curve $\gamma$ which is 
%either a periodic orbit or contained in a saddle connection. 
%%
%Suppose that $\gamma$ is a periodic orbit (resp. contained in a saddle connection). 
%Then there is a \nbd of $\gamma$ which is homeomorphic to a M\"obius band. 
%Adding a center disk (resp. a disk with $\partial$-saddles and $\partial$-$0$-saddles 
%as in Figure \ref{fig01}) 
%to the new boundary of $M - \gamma$, 
%we obtain the new surface $M'$ and 
%the new non-wandering flow $w'$ on $M'$. 
%Since the genus $g(M')$ is less than $g(M)$, 
%by inductive hypothesis, 
%%$w'$ can be approximated by a regular non-wandering flow $w''$ with 
%%locally dense orbits and without heteroclinic connections. 
%the above argument constructs 
%a desired non-wandering flow on $M$ with locally dense orbits which approximates $v$. 
%
This completes the proof. 
\end{proof}

\begin{figure}
\begin{center}
\includegraphics[scale=0.45]{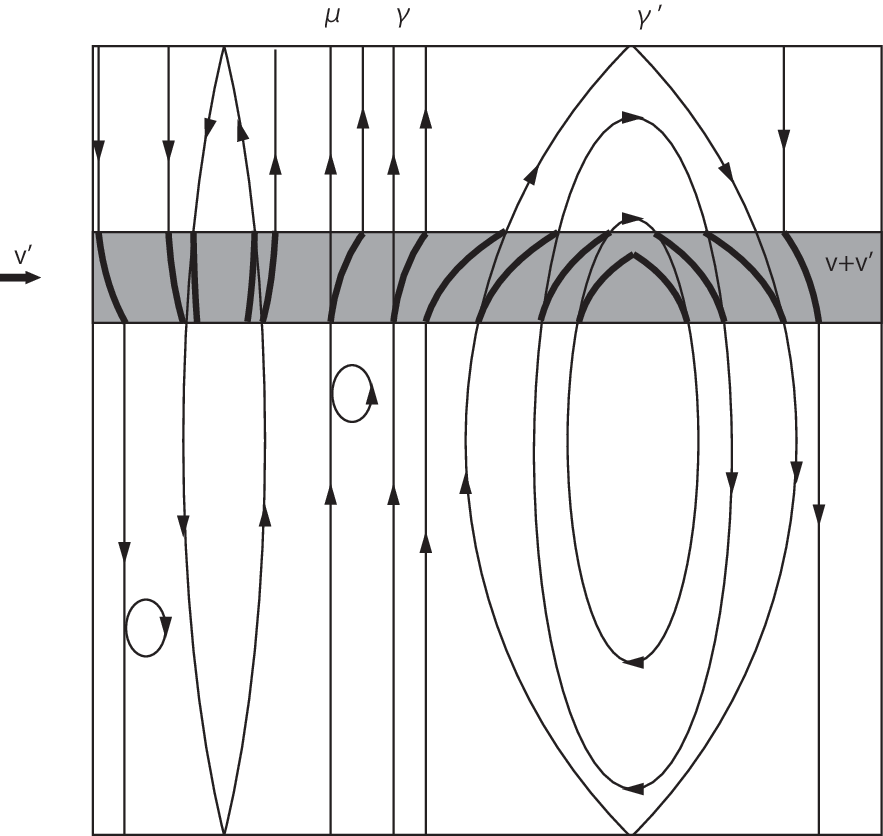}
\end{center}
\caption{Perturbation $v + v'$}
\label{ss}
\end{figure}

%\section{An example}
%%\section{Examples}
%
%We construct 
%a non-wandering flow on $\T^2$ 
%such that 
%$\mathrm{P}$ and 
%$\mathrm{LD}$ are dense. 
%In particular, 
%$\mathrm{LD}$ is not open. 
%This example also shows that 
%the finiteness condition in Lemma \ref{lem02} 
%is necessary. 
%
%
%
%\begin{example}\label{ex1}
%Consider an irrational rotation $v$ on $\T^2$. 
%Fix any points $p \in \T^2$ with $O_v(p)$. 
%Using dump functions, 
%replace $O_v(p)$ 
%with a union of countably many singular points $p_i$ ($i \in \Z$) 
%and countably many proper orbits 
%such that 
%$(p_i)$ converges to $p_0$ as $i \to \infty$ (resp. $i \to - \infty$).
%Moreover 
%we choose that  
%there is a sequence $(t_i)$ 
%such that 
%$p_i = O_v(t_i, p_0)$ and 
%that 
%$t_i \to \infty$ as $i \to \infty$ 
%(resp. $t_i \to - \infty$ as $i \to -\infty$). 
%Let $v'$ be the resulting vector field. 
%For any point $x \in \T^2 - O(p)$, 
%we have $O_v(x) = O_{v'}(x)$. 
%Moreover $O_v(p) - \mathop{\mathrm{Sing}}(v') = \mathrm{P}(v')$.  
%%is the union of proper orbits of $v'$. 
%\end{example}
%

%We construct 
%a smooth non-wandering flow on $\T^2$ 
%which has uncountably many singular points robustly 
%for the $C^2$-topology.  
%
%\begin{example}
%Consider a smooth function $f: \T^1 \to [-1,1]$ 
%such that  and $\mathop{\mathrm{Fix}(f) = \{0, 1/2}$ 
%$df/dy(0) > 0$.  
%Define a flow $v: \R \times \T^2 \to \T^2$ by $v_t(x, y) = (x + t \cdot f(y), y )$. 
%Then the vector field $V:= \partial v/\partial t = (f(y), 0)$ is smooth 
%such that $\partial V/\partial x = 0$ and 
%$\partial V/\partial y(0) = df/dy(0) > 0$. 
%\end{example}

\end{document}